\newtheorem{theorem}{Theorem}[section]
\newtheorem{lemma}[theorem]{Lemma}
\newtheorem{problem}{Problem}
\begin{document}
\title{Path Extendable Tournaments
}
\author{
Zan-Bo Zhang$^{1,2}$\thanks{Email address: eltonzhang2001@gmail.com, zanbozhang@gdufe.edu.cn. Partially supported by
Guangdong Basic and Applied Basic Research Foundation (2020B1515310009), and NSFC (U1811461).}
Weihua He$^{3}$\thanks{Email address: hwh12@gdut.edu.cn.
Supported by Guangdong Basic and Applied Basic Research Foundation (2021A1515012047),
and Science and Technology Program of Guangzhou (202002030401).}
Hajo Broersma$^4$\thanks{Email address: h.j.broersma@utwente.nl.}
Xiaoyan Zhang$^{5}$\thanks{Corresponding Author. Email address: royxyzhang@gmail.com, zhangxiaoyan@njnu.edu.cn.
Supported by NSFC (11871280) and QingLan Project.}
\\ \small $^1$ School of Statistics and Mathematics, Guangdong University of
\\ \small  Finance and Economics, Guangzhou 510320, China
\\ \small $^2$ Institute of Artificial Intelligence and Deep Learning, Guangdong University
\\ \small of Finance and Economics, Guangzhou 510320, China
\\ \small $^3$ School of Mathematics and Statistics, Guangdong University
\\ \small of Technology, Guangzhou 510006, China
\\ \small $^4$ Faculty of Electrical Engineering, Mathematics and Computer Science,
\\ \small University of Twente, P.O. Box 217, 7500 AE Enschede, The Netherlands
\\ \small $^5$Institute of Mathematics \& School of Mathematical Science,
\\ \small  Nanjing Normal University, Nanjing, 210023, China
}
\date{}
\maketitle

\begin{abstract}
\noindent
A digraph $D$ is called \emph{path extendable} if for every nonhamiltonian (directed) path
$P$ in $D$, there exists another path $P^\prime$ with the same initial and
terminal vertices as $P$, and $V(P^\prime) = V (P)\cup \{w\}$ for a vertex $w \in V(D)\setminus V(P)$.
Hence, path extendability implies paths of continuous lengths between every vertex pair.
In earlier works of C. Thomassen and K. Zhang,
it was shown that the condition of small $i(T)$ or positive $\pi_2(T)$
implies paths of continuous lengths between every vertex pair in a tournament $T$,
where $i(T)$ is the irregularity of $T$ and
$\pi_2(T)$ denotes for the minimum number of paths of length $2$ from $u$ to $v$
among all vertex pairs $\{u,v\}$.
Motivated by these results, we study
sufficient conditions in terms of $i(T)$ and $\pi_2(T)$
that guarantee a tournament $T$ is path extendable.
We prove that (1) a tournament $T$ is path extendable
if $i(T)< 2\pi_2(T)-(|T|+8)/6$, and (2) a tournament $T$ is path extendable
if $\pi_2(T) > (7|T|-10)/36$.
As an application, we deduce that almost all random tournaments are path extendable.

$\newline$\noindent\textbf{Key words}: path, path extendability, tournament,
random tournament
\end{abstract}

\section{Introduction, terminology and notation}

%
%

\noindent
It is commonly known that tournaments have a rich path and cycle structure. As an example,
every tournament has a hamiltonian path, and every strong tournament is hamiltonian, vertex-pancyclic,
and even cycle extendable
(for cycle extendability a well-characterized class of exceptional graphs has to be excluded, see \cite{Hendry1989,Moon1969}).

However, many stronger properties involving connecting paths of specific
lengths, such as hamiltonian-connectedness, 
panconnectedness and path extendability,
do not generally hold in tournaments.
Since the late 1960s, many researchers have devoted themselves to finding sufficient conditions
for these properties.


Throughout the paper, we use $n$ or $|D|$ to denote the number of vertices of a digraph $D$.
Let $D$ be a digraph on $n\ge 2$ vertices, and let $u,v\in V(D)$ be two distinct vertices.
We use the term $k$-path to indicate a directed path of length $k$,
i.e., containing $k$ arcs,
and $(u,v)$-$k$-path as shorthand for a $k$-path from $u$ to $v$.
Adopting the terminology of Thomassen \cite{Thomassen1980},
a digraph $D$ is said to be \emph{weakly hamiltonian-connected}
if for any distinct $x,y \in V(D)$ there exists a hamiltonian path from $x$ to $y$ or from $y$ to $x$,
and $D$ is called \emph{strongly hamiltonian-connected}
if for any distinct $x,y \in V(D)$ there exists a hamiltonian path from $x$ to $y$ and from $y$ to $x$.
Similarly, in \cite{Thomassen1980}, $D$ is called \emph{weakly panconnected} if for any distinct $x,y \in V(D)$ there exists  an $(x,y)$-$k$-path or a $(y,x)$-$k$-path for every integer $k$ with $3\le k \le n-1$, and
\emph{strongly panconnected}
if for any distinct $x,y \in V(D)$ there exists  an $(x,y)$-$k$-path and a $(y,x)$-$k$-path for every integer $k$ with $3\le k \le n-1$.

In \cite{Thomassen1980}, Thomassen characterized all weakly hamiltonian-connected tournaments,
and showed that for a tournament being weakly hamiltonian-connected is equivalent to being weakly
panconnected. He also proved that being $4$-strong is a sufficient condition for a
tournament to be strongly hamiltonian-connected.
Here a digraph $D$ is called \emph{$k$-strong} if $D-A$ is strongly connected for any set $A\subseteq V(D)$ with at most $k-1$ vertices.

Then, strengthening a result of Alspach et al., 
he showed that a tournament $T$ that is close to regular, or to be precise, with small \emph{irregularity}
$i(T)\triangleq \max \{|d^+(u)-d^-(u)|\mid u\in V(T)\}$,
is strongly panconnected. 
Note that in a tournament the condition of irregularity can be turned into a degree condition as
\begin{equation} \label{eqn:i(T)andDegree}
\min\{\delta^+(T), \delta^-(T)\} =\frac{n-i(T)-1}{2}.
\end{equation}


\begin{theorem} \label{thm:irregularitystronglypanconnected} (Thomassen \cite{Thomassen1980})
If $T$ is a tournament on $n \geq 5k + 21$ vertices with $i(T) \leq k$, then $T$ is strongly panconnected.
\end{theorem}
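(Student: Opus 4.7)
The plan is to reduce strong panconnectedness to a strong hamiltonian-connectedness statement on suitably chosen subtournaments. Given distinct $u,v \in V(T)$ and a target length $\ell$ with $3 \le \ell \le n-1$, I would produce a set $S$ with $\{u,v\} \subseteq S \subseteq V(T)$ and $|S| = \ell+1$ such that $T[S]$ is strongly hamiltonian-connected; a $(u,v)$-hamiltonian path and a $(v,u)$-hamiltonian path in $T[S]$ are then a $(u,v)$-$\ell$-path and a $(v,u)$-$\ell$-path in $T$. To force strong hamiltonian-connectedness of $T[S]$, I would invoke Thomassen's result that every $4$-strong tournament is strongly hamiltonian-connected, so the task reduces to choosing $S$ so that $T[S]$ is $4$-strong.

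By (\ref{eqn:i(T)andDegree}), the hypothesis $i(T) \le k$ gives $\delta^+(T), \delta^-(T) \ge (n-k-1)/2$, so every vertex of $T$ is within $k/2$ of being exactly regular. The key step is to choose the deletion set $R = V(T) \setminus S$ so that \emph{every} vertex $w \in S$ retains roughly $\ell/2$ out-neighbors and in-neighbors inside $S$. A uniformly random $R$ among subsets of $V(T) \setminus \{u,v\}$ of size $n - \ell - 1$ satisfies
\begin{equation*}
\mathbb{E}\bigl[d_{T[S]}^+(w)\bigr] \;\ge\; \frac{n-k-1}{2}\cdot\frac{\ell-2}{n-3} \;\ge\; \frac{\ell}{2} - O(k),
\end{equation*}
with an analogous in-degree bound; a union bound and standard concentration yield a single $R$ for which the irregularity of $T[S]$ is bounded by an explicit linear function of $k$. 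A Moon-type argument then shows that a tournament of order $m$ with $\delta^\pm \ge (m-1)/2 - c$ is at least $\lfloor (m-2c-1)/2 \rfloor$-strong, so $T[S]$ is $4$-strong once $\ell$ exceeds a small absolute constant plus $O(k)$; the hypothesis $n \ge 5k + 21$ supplies precisely this slack uniformly over $\ell$.

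The main obstacle is the simultaneous control over all vertices in $S$, combined with the smallest lengths. Concentration is straightforward for $\ell$ in the bulk of the range, and trivial at $\ell = n-1$ since there is then no deletion to do, but for $\ell \in \{3,4,5\}$ the candidate subtournament $T[S]$ is too small for any averaging argument and these cases must be handled by direct structural analysis: one exhibits a short $(u,v)$-path explicitly using common-neighborhood estimates that follow from $\delta^\pm(T) \ge (n-k-1)/2$ (for instance, for $\ell = 3$ showing that $N^+(u) \cap N^-(v)$ contains two vertices $w_1, w_2$ with $w_1 \to w_2$). The technical heart of the argument is fitting the probabilistic regime, the trivial regime, and the small regime under a single uniform hypothesis $n \ge 5k + 21$.
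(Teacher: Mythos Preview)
First, note that the paper does not contain a proof of this theorem: it is quoted as a result of Thomassen~\cite{Thomassen1980} and used only as background motivation, so there is no proof in the paper for your proposal to be compared against.

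On its own merits, your outline has two concrete gaps. The ``Moon-type'' connectivity bound you invoke is false as stated: a tournament on $m$ vertices with $\min\{\delta^+,\delta^-\}\ge (m-1)/2-c$ need only be about $(m-4c)/3$-strong, not $\lfloor(m-2c-1)/2\rfloor$-strong. Take three vertex-disjoint regular tournaments $A,B,C$, each of order $2t+1$, with $A\to B\to C\to A$; the result is a regular tournament on $m=6t+3$ vertices whose connectivity is exactly $2t+1=m/3$, well below $(m-1)/2=3t+1$. Second, the concentration step does not deliver what you claim. A uniformly random $S$ of size $\ell+1$ gives each vertex a hypergeometric degree inside $T[S]$ with standard deviation of order $\sqrt{\ell}$, so after a union bound the irregularity of $T[S]$ is $O(k\ell/n)+O(\sqrt{\ell\log\ell})$, not an explicit linear function of $k$ alone. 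The second term forces the ``direct structural analysis'' regime to extend far beyond $\ell\in\{3,4,5\}$: you need $\sqrt{\ell\log\ell}$ small compared to $\ell$ (with the corrected $1/3$ constant, roughly $\ell\gtrsim$ an absolute constant in the dozens), and for $6\le\ell$ in this range the subtournament is too large for the ad hoc $2$-path counting you sketch for $\ell=3$ yet too small to be $4$-strong by any averaging. Even with both issues repaired, the method would yield only $n\ge C_1k+C_2$ for some constants arising from the concentration and the boundary cases, with no mechanism to recover the specific threshold $5k+21$.
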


Note that in the definition of strongly panconnectedness the existence of paths of length 2 is not required.
A more restrictive concept which includes the existence of $2$-paths was studied by Zhang in \cite{Zhang1982}. There he defines a digraph $D$ to be \emph{completely strong path-connected}
if there exists a $(u,v)$-$k$-path and a $(v,u)$-$k$-path in $D$ for every distinct $u\in V(D)$ and $v\in V(D)$
and every integer $k$ with $2\le k \le n-1$.

Let $T$ be a tournament. We denote %
the number of distinct $(u,v)$-$2$-paths in $T$ by $p_2(u,v)$,
and we let $\pi_2(T)=\min \{p_2(u,v)\mid u,v \in V(T)\}$ be the global infimum of $p_2(u,v)$,
where $u$ and $v$ are distinct vertices of $T$.
In \cite{Zhang1982}, Zhang presented the following characterization of completely strong path-connected tournaments in terms of $\pi_2(T)$.

\begin{theorem} \label{thm:2pathallpaths} (Zhang \cite{Zhang1982})
A tournament $T$ on $n$ vertices is completely strong path-connected if and only if $\pi_2(T)\ge 1$,
unless $T$ belongs to one well-characterized class of exceptional graphs.
\end{theorem}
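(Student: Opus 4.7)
The plan is to handle the two directions of the biconditional separately. The forward direction is immediate: if $T$ is completely strong path-connected, then instantiating the definition at $k=2$ provides a $(u,v)$-$2$-path for every ordered pair of distinct vertices $u,v$, giving $p_2(u,v)\ge 1$ for all such pairs and hence $\pi_2(T)\ge 1$. All of the content lies in the reverse direction, and the exceptional class will surface there.

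For sufficiency, I would induct on the length $k$, showing that whenever $\pi_2(T)\ge 1$ and $T$ is not exceptional, there is a $(u,v)$-$k$-path for every ordered distinct pair $(u,v)$ and every $2 \le k \le n-1$. The base case $k=2$ is the hypothesis. For the inductive step, suppose $P = u_0 u_1 \cdots u_k$ is a $(u_0,u_k)$-$k$-path with $k < n-1$; I would try to extend $P$ by \emph{inserting} some $w \in V(T)\setminus V(P)$, namely locating an index $i$ with both arcs $u_i w$ and $w u_{i+1}$ present, which yields $u_0 \cdots u_i w u_{i+1}\cdots u_k$ as a $(u_0,u_k)$-$(k+1)$-path. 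If such an insertion succeeds for some choice of $w$ and some $(u_0,u_k)$-$k$-path supplied by the induction hypothesis, the step is complete.

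The main technical obstruction is the configuration in which every $w\in V(T)\setminus V(P)$ blocks insertion. Such a $w$ must exhibit a monotone arc pattern relative to $P$: there is an index $j(w)\in \{-1,0,\dots,k\}$ with $w\to u_i$ for $i\le j(w)$ and $u_i\to w$ for $i> j(w)$. My plan is to exploit $\pi_2(T)\ge 1$ applied to carefully chosen pairs on $P$, for example $(u_{j(w)+1},u_{j(w)})$ or $(u_k,u_0)$, to locate an auxiliary vertex $x$ enabling a local detour: either $x$ alone can be inserted, or $x$ together with $w$ allows a short rerouting of $P$ that simultaneously absorbs $w$ and gains one vertex. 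Combined with the freedom to vary which $(u_0,u_k)$-$k$-path delivered by the induction one starts from, this should yield an extended path whenever the global structure of $T$ is at all flexible, and force the otherwise unresolvable cases into a very rigid configuration.

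The hard part will be identifying and enumerating this exceptional class cleanly. The monotone-insertion obstruction induces an ordered partition of $V(T)\setminus V(P)$ by the indices $j(w)$, and the absence of any successful rerouting requires the induced tournament on $V(P)$ together with the attachment pattern of the outside vertices to be nearly transitive around specific indices. Making this precise seems to require a finite case analysis in small $n$ (to enumerate the sporadic exceptions) together with a global structural argument ruling out infinite exceptional families; I expect the exceptions to be small-order tournaments in which every $2$-path witnessing $\pi_2(T)\ge 1$ for some critical pair is itself routed through a vertex trapped by the monotone pattern, so that no local detour is ever available. Verifying that this list is complete, and that for every other $T$ with $\pi_2(T)\ge 1$ some rerouting succeeds, is the central difficulty of the proof.
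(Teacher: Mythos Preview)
The paper does not contain a proof of this theorem; it is quoted from Zhang's 1982 paper \cite{Zhang1982} as background, and no argument for it appears anywhere in the present text. So there is no ``paper's own proof'' to compare your proposal against.

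That said, two remarks on your outline are worth making in light of what this paper does establish. First, your primary mechanism---inserting a single outside vertex $w$ into the given $(u_0,u_k)$-$k$-path---is exactly path extension, and the paper explicitly constructs (see the class $\mathcal{T}_3$ in Section~\ref{sec:MainResults}) tournaments with $\pi_2(T)$ arbitrarily large in which some nonhamiltonian path admits no such insertion. So the step ``if insertion succeeds for some $w$ and some $(u_0,u_k)$-$k$-path supplied by induction, we are done'' will genuinely fail in infinitely many cases, and the burden falls entirely on your rerouting clause. You acknowledge this, but the description of the rerouting (``$x$ together with $w$ allows a short rerouting'') is where all the work in Zhang's argument actually lives; it is not a mop-up step.

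Second, your expectation that the exceptional class consists of ``small-order tournaments'' handled by ``a finite case analysis in small $n$'' is likely wrong. The phrasing ``one well-characterized class'' in the statement, together with the infinite non-path-extendable families this paper exhibits under $\pi_2(T)\ge 1$, strongly suggests the exceptions form an infinite structured family rather than a sporadic list. Your plan to ``rule out infinite exceptional families'' by a global structural argument is therefore aimed at the wrong target: you should instead be prepared to \emph{identify} an infinite exceptional family and prove that every non-member admits the required rerouting.
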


Motivated by the aforementioned structural results on tournaments and the results due to Hendry on path extendability in undirected graphs \cite{Hendry1990b}, we focus on the analogous concept of path extendability in tournaments.
For this purpose, we say that a
nonhamiltonian path $P$ in a digraph $D$ is \emph{extendable},
if there exists another path $P^\prime$ in $D$ with
the same initial vertex and terminal vertex as $P$,
and with $V(P^\prime)=V(P)\cup \{w\}$ for some vertex $w\in V(D)\setminus V(P)$
(note that Faudree and Gy\'{a}rf\'{a}s \cite{FG1996} study another kind of path extendability in tournaments,
which extends a path from one of its endpoints).
Moreover, by including the following specific length restriction,
a digraph $D$ is called \emph{$\{k+\}$-path extendable},
if every nonhamiltonian path of length at least $k$ is extendable in $D$.
And naturally, we use the shorter term \emph{path extendability} for $\{1+\}$-path extendability.
The current authors have studied path extendability in tournaments, and obtained
the following result,
in which the structure of the exceptional graphs will be described in Section \ref{sec:2PathsPathExt}
and be used to prove Theorem \ref{thm:2PathsPathExt}.


\begin{theorem} \label{thm:path3+Extendable} (Zhang et al. 
\cite{ZZBL2017})
A regular tournament $T$ with at least $7$ vertices is $\{2+\}$-path extendable,
unless $T\in \mathcal{T}_1$, $T\in \mathcal{T}_2$ or {$T=T_0$}.
\end{theorem}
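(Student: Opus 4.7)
My plan is to isolate the obstruction to extendability locally at each external vertex and then use regularity to show that this obstruction can only survive in a very small number of configurations. Fix a nonhamiltonian path $P = v_0 v_1 \cdots v_k$ in $T$ with $k \ge 2$, and let $W = V(T) \setminus V(P)$. For $w \in W$, I would call an index $i \in \{0, \ldots, k-1\}$ an \emph{insertion slot} for $w$ if $v_i \to w \to v_{i+1}$; such a slot immediately yields the extended path $v_0 \cdots v_i w v_{i+1} \cdots v_k$. So the core assumption for the rest of the argument is that no $w \in W$ admits an insertion slot, which forces each $w \in W$ to have a \emph{monotone profile} on $P$: there is $j_w \in \{-1, 0, \ldots, k\}$ with $w \to v_i$ for $i \le j_w$ and $v_i \to w$ for $i > j_w$.

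Next, under this assumption I would run a double-counting argument using regularity. Since $d^+(v) = d^-(v) = (n-1)/2$ for every vertex $v$, summing these equalities over the vertices of $P$ and subtracting the contribution of the arcs inside $P$ produces an identity involving the multiset $\{j_w : w \in W\}$. Together with the obvious bounds $-1 \le j_w \le k$ and the fact that only one value of $j_w$ is attached to each $w$, this sharply constrains the profile distribution; except in a few small tournaments, the identity alone forces some insertion slot to exist.

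For the configurations where the counting is still consistent with monotonicity, I would look for rearrangements more elaborate than a single insertion. Given $w$ with profile index $j := j_w$, the critical arc of $P$ is $v_j v_{j+1}$. A natural candidate is the local swap $v_0 \cdots v_{j-1} v_{j+1} w v_j v_{j+2} \cdots v_k$, which is a valid extension whenever the chords $v_{j-1} v_{j+1}$ and $v_j v_{j+2}$ both point forward, the middle arcs $v_{j+1} \to w$ and $w \to v_j$ being guaranteed by monotonicity. Symmetric moves at the other end of $P$, together with rearrangements passing through two distinct external vertices $w, w' \in W$ simultaneously, give further sufficient conditions for extendability; when $k \ge 3$ the resulting abundance of alternatives, combined with the counting, should force some extension to succeed without producing any new exceptions.

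The main obstacle, which I would address last, is the exhaustive small-case analysis. For the shortest paths ($k = 2$) the rerouting options are the most limited, so this is precisely where the exceptional families must appear: every rearrangement through each $w \in W$, and through each pair $w, w' \in W$, must fail simultaneously, which pins down the orientation of almost every arc incident with $V(P)$. Because regularity fixes $n$ odd and all degrees equal to $(n-1)/2$, the resulting configurations on $n \ge 7$ vertices form a finite list which, I expect, will match exactly the three cases $T \in \mathcal{T}_1$, $T \in \mathcal{T}_2$ and $T = T_0$ stated in the theorem. Isolating these three families, and ruling out every other surviving configuration, is where the genuine work of the proof will lie.
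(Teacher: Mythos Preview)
This theorem is not proved in the present paper; it is quoted from \cite{ZZBL2017} and only the exceptional classes $\mathcal{T}_1$, $\mathcal{T}_2$, $T_0$ are described here (in Section~\ref{sec:2PathsPathExt}) so that they can be used in the proof of Theorem~\ref{thm:2PathsPathExt}. Consequently there is no proof in the paper to compare your proposal against.

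That said, your outline contains a concrete misconception about where the exceptions live. You assert that the exceptional families must all arise from the case $k=2$, the shortest non-extendable paths. But inspect the descriptions of $\mathcal{T}_1$ and $\mathcal{T}_2$ given in Section~\ref{sec:2PathsPathExt}: in $\mathcal{T}_1$ the distinguished non-extendable path is $v_0v_1v_2v_3v_4$, of length $4$; in $\mathcal{T}_2$ the distinguished path $v_0v_1\cdots v_{p-1}$ has $p\ge 3$ odd and \emph{unbounded}, with $n_0+n_1\ge p$ the only link to $|T|$. So the classification cannot be recovered by an exhaustive analysis at $k=2$ alone, and your plan to confine all the ``genuine work'' to that case would miss infinitely many exceptional tournaments. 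The monotone-profile observation and the rerouting moves you describe are correct and are indeed the right local tools (they match what the paper uses elsewhere, e.g.\ in Lemma~\ref{lem:h(P)}), but the global structure of the argument must allow the obstruction to persist along long paths, not just $2$-paths.
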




Reflecting on Bondy's idea in his meta-conjecture on hamiltonian undirected graphs \cite{Bondy1973}, and considering results of Bondy on pancyclicity \cite{Bondy1971} and Hendry on cycle extendability \cite{Hendry1990a} and path extendability \cite{Hendry1990b}, one is tempted to expect that imposing similar conditions
as in
Theorem~\ref{thm:irregularitystronglypanconnected}
and Theorem~\ref{thm:2pathallpaths} on a tournament would in fact imply that it is path extendable.
With this in mind, the following problems arise quite naturally.


\begin{problem} \label{prb:pi_2}  (Zhang et al. \cite{ZZBL2017}, Problem 2)
Does $\pi_2(T)\ge 1$ imply a tournament $T$ is path extendable?
\end{problem}

\begin{problem} \label{prb:i} (Zhang et al. \cite{ZZBL2017}, Problem 3)
Let $T$ be a tournament on $n$ vertices with $i(T) \le k$.
Does there exist a function $f(k)$, such that $n \ge f(k)$
implies $T$ is path extendable?
\end{problem}

In the next section, the two above problems will be solved in the negative.
However, we derive several sufficient
conditions in terms of $i(T)$ and $\pi_2(T)$ for path extendability, including a condition that
involves both $i(T)$ and $\pi_2(T)$.
We also discuss the sharpness of our new results, and we deduce that almost all tournaments are path extendable.

We explain some notation here.
Let $D$ be a digraph, and $U_1, U_2, U_3 \subseteq V(D)$ be three pairwise disjoint subsets of $D$.
The subdigraph induced by $U_1$ is denoted by $\langle U_1 \rangle$.
The number of arcs in $D$ from $U_1$ to $U_2$ is denoted by $d^+(U_1, U_2)$.
$U_1 \rightarrow U_2$ means that $U_1$ dominates $U_2$,
i.e. every vertex in $U_1$ sends an arc to every vertex in $U_2$.
For convenience, when $U_1=\{u\}$ we write $u$ instead of $\{u\}$, and
we use $U_1\rightarrow U_2\rightarrow U_3$ as an abbreviation for $U_1\rightarrow U_2$ and $U_2\rightarrow U_3$.
Let $P$ be a path and $u,v$ be two vertices on $P$. Then $P[u,v]$ stands for the subpath of $P$ from $u$ to $v$.
The other terms and notation in this paper are in accordance with the monograph \cite{BanGut2008}.

\section{Main results} \label{sec:MainResults}

Before we turn to the solutions of Problem~\ref{prb:pi_2} and Problem~\ref{prb:i},
let us first define what we mean by a \emph{doubly regular tournament}.
A regular tournament $T$ is called doubly regular if for every vertex pair
$\{u,v\}$ in $T$, the number of common out-neighbors of
$u$ and $v$ is the same constant, which we denote by $\lambda$.
Let $T$ be a doubly regular tournament on $n$ vertices, with $\lambda$ as above.
Then simple calculations show that $\lambda = (n-3)/4$, a result that can be found in an early paper due to Reid and Brown \cite{ReiBro1972}.
Therefore, $n=4\lambda+3$, hence $n \equiv 3 \pmod 4$.
Furthermore, one also easily shows that for any $uv \in A(T)$,
the number of common in-neighbors of $u$ and $v$, $p_2(u,v)$,
and $p_2(v,u)$ are fixed to $\lambda$, $\lambda$, and $\lambda + 1$, respectively.
Hence, in a doubly regular tournament $T$ we have $\pi_2(T)=\lambda=(n-3)/4$.
This shows that the upper bound on $\pi_2(T)$ in the below theorem, which will be proved in Section~\ref{sec:Surplus},
is sharp.

\begin{theorem} \label{thm:pi_2Supremum}
In a tournament $T$, $\pi_2(T)\le (n-3)/4$.
\end{theorem}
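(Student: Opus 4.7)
The plan is to identify a specific pair $(u,v)$ with $u\in N^-(v)$. The key observation is that for such a pair, every $2$-path $u\to w\to v$ must have $w\in N^+(u)\cap N^-(v)$, so $p_2(u,v)$ equals exactly the out-degree of $u$ inside the sub-tournament $\langle N^-(v)\rangle$. Hence the natural strategy is first to choose $v$ whose in-neighborhood is relatively small, and then, inside that sub-tournament, to pick a $u$ of small out-degree.

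Concretely, I would first choose $v\in V(T)$ with $d^-(v)\le (n-1)/2$; such a $v$ exists because the average in-degree in $T$ equals $(n-1)/2$. Set $A=N^-(v)$, so $|A|=d^-(v)$. Since the out-degrees in the sub-tournament $\langle A\rangle$ sum to $\binom{|A|}{2}$, some $u\in A$ has out-degree in $\langle A\rangle$ at most $(|A|-1)/2$. Combining this with the observation above,
$$p_2(u,v)\ \le\ \frac{|A|-1}{2}\ =\ \frac{d^-(v)-1}{2}\ \le\ \frac{(n-1)/2-1}{2}\ =\ \frac{n-3}{4}.$$
Since $u\ne v$, this yields $\pi_2(T)\le p_2(u,v)\le (n-3)/4$, as required.

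I do not expect any real obstacle here. The only subtle point worth flagging is that a crude global averaging of $p_2(u,v)$ over all $n(n-1)$ ordered pairs gives merely $\pi_2(T)\le (n-1)/4$, which is too weak by exactly $1/2$; that missing $1/2$ is recovered precisely by descending to the sub-tournament $\langle N^-(v)\rangle$, where the local averaging picks up an additional ``$-1$''. Sharpness of the resulting bound is, as the paper notes, already witnessed by the doubly regular tournaments, where the chain of inequalities collapses to equalities throughout.
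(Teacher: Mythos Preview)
Your proof is correct and takes a genuinely different route from the paper's. The paper's argument for Theorem~\ref{thm:pi_2Supremum} is global: it bounds the total number of $2$-paths in $T$ from above by $\sum_w d^+(w)d^-(w)$ and from below via the newly introduced \emph{surplus} inequality $s(V(T))\ge \binom{n}{2}-\lfloor n/2\rfloor\lceil n/2\rceil$, then splits into parity cases, with some extra structural work needed when $n$ is even. Your argument is local and considerably shorter: a min-of-min selection---first a vertex $v$ of small in-degree, then a vertex $u$ of small out-degree inside $\langle N^-(v)\rangle$---directly exhibits a pair with $p_2(u,v)\le (n-3)/4$, with no parity split. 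Interestingly, this is essentially the dual of the paper's own proof of Lemma~\ref{lem:i(T)AndPi(T)} (which, combined with $i(T)\ge 0$, re-derives Theorem~\ref{thm:pi_2Supremum} anyway); so the paper already contains your argument in disguise, just deployed later. What the longer proof buys is that it exercises the surplus machinery that is reused in Lemmas~\ref{lem:Surplus} and~\ref{lem:SurplusRegular} and throughout Sections~\ref{sec:2PathsPathExt} and~\ref{sec:pi_2(T)i(T)}. One minor point: your argument tacitly assumes $N^-(v)\ne\emptyset$; if $d^-(v)=0$ then any $u\ne v$ has $p_2(u,v)=0$, which already suffices for $n\ge 3$.
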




With the help of the above discussion on doubly regular tournaments, we can now provide negative answers to the questions in the aforementioned two problems.

\medskip
\noindent
\textbf{Solution to Problem \ref{prb:pi_2}.}
Let $\mathcal{T}_3$ be the class of regular tournaments 
whose vertex set can be partitioned as
$V=V_0\cup V_1 \cup V_2$, 
with $|V_i|=4t+3$ for an integer $t\ge 1$. %
Let
the induced subgraph $\langle V_i \rangle$ 
be a doubly regular tournament 
(these exist for infinitely many choices of $t$ by a result in \cite{ReiBro1972}),
and %
let
$V_i \rightarrow V_{i+1}$  for $i\in \{0,1,2\}$, with the subscripts 
taken modulo $3$.
%
%
 %
Consider any vertex pair $\{u,v\}$ with $uv\in A(T)$ in %
a tournament $T\in \mathcal{T}_3$
on $n=12t+9$ vertices.
If $u,v \in V_i$ for any $i\in \{0,1,2\}$,
then all $2$-paths between $u$ and $v$ must be in $\langle V_i \rangle$.
By %
 the
 above discussion on doubly regular tournaments,
there are $t=(n-9)/12$ $(u,v)$-$2$-paths and $t+1=(n+3)/12$ $(v,u)$-$2$-paths in $\langle V_i \rangle$, %
hence
in $T$ as well.
If $u$ and $v$ are in different $V_i$, then 
there are clearly more than $t$
$(u,v)$-$2$-paths and $(v,u)$-$2$-paths in 
$T$. Hence $\pi_2(T)=(n-9)/12$.
But %
obviously
any hamiltonian path of $\langle V_i \rangle$ for $i\in \{0,1,2\}$ is not extendable.
So we have a negative
answer to the question in Problem~\ref{prb:pi_2}.


\medskip
\noindent
\textbf{Solution to Problem \ref{prb:i}.}
By the main result in \cite{ReiBro1972}, there exist doubly regular tournaments of arbitrarily large (but specific) order. Using the above approach and notation,
we can construct a regular tournament $T\in \mathcal{T}_3$ of sufficiently large order $n$, such that each $\langle V_i \rangle$ is isomorphic to
a doubly regular tournament.
Since $i(T)=0$, for whatever nonnegative integer $k$ we choose, we always have $i(T) \le k$.
Moreover, for any function $f$, we can always construct $T$ as above with sufficiently large $n \ge f(k)$.
Recalling that $T$ is not path extendable, this way we obtain concrete evidence for a negative answer to the question in Problem~\ref{prb:i}.
$\newline$

To continue our exposition with some positive results on path extendability, we start by stating the following theorem which appeared in
Chapter 5 of \cite{Zhang2017}.


\begin{theorem} \label{thm:DRTPathExt} (Zhang \cite{Zhang2017}, Theorem 5.17)
All doubly regular tournaments on at least $7$ vertices are path extendable.
\end{theorem}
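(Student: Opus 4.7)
The plan is to split the argument by the length of the nonhamiltonian path and invoke Theorem~\ref{thm:path3+Extendable} for the longer ones. Fix a doubly regular tournament $T$ on $n \geq 7$ vertices, so $\lambda = (n-3)/4 \geq 1$, and let $P$ be an arbitrary nonhamiltonian path in $T$. If $P$ has length one, say $P = uv$, then extending $P$ amounts to producing a vertex $w \in V(T) \setminus \{u,v\}$ with $u \to w \to v$, i.e., to $p_2(u,v) \geq 1$; and the doubly regular hypothesis directly gives $p_2(u,v) = \lambda \geq 1$, so the single-arc case is immediate.

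For nonhamiltonian paths of length at least two, I would appeal to Theorem~\ref{thm:path3+Extendable}, which guarantees $\{2+\}$-path extendability in any regular tournament on at least $7$ vertices outside the exceptional classes $\mathcal{T}_1$, $\mathcal{T}_2$, and the single tournament $T_0$. Since a doubly regular tournament is regular by definition and has $n \geq 7$, the theorem applies the moment we have verified $T \notin \mathcal{T}_1 \cup \mathcal{T}_2 \cup \{T_0\}$. This verification is the main technical obstacle: one has to inspect the structural descriptions of $\mathcal{T}_1$, $\mathcal{T}_2$, and $T_0$ given in Section~\ref{sec:2PathsPathExt} and confirm that each of them violates the very rigid double regularity requirement that \emph{every} ordered pair of distinct vertices has exactly $\lambda$ common out-neighbors. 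I would expect any asymmetry, distinguished vertex, or non-uniform $2$-path count appearing in the exceptional structures to rule them out immediately, but the bookkeeping has to be carried out class by class.

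As a self-contained back-up, in case the exception check turns out to be fiddly, one could instead run a direct insertion count. Writing $P = u_0 u_1 \cdots u_k$, the identity $p_2(u_i,u_{i+1}) = \lambda$ supplies exactly $\lambda$ midpoint candidates per arc of $P$, hence $k\lambda$ insertion candidates in total; the task is then to bound the number of ``internal'' candidates (those with $w \in V(P)$) using the uniform common-neighborhood identities of a doubly regular tournament and to show that this bound falls short of $k\lambda$. The delicate sub-case is that of near-Hamiltonian paths, where the unique outside vertex $w$ has exactly $(n-1)/2$ in- and $(n-1)/2$ out-neighbors on $P$; here one has to exploit double regularity once more to rule out the unique obstructing sign pattern along $P$, in which all in-neighbors of $w$ precede all out-neighbors, since this is the only configuration that blocks inserting $w$.
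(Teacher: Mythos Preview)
The paper does not prove Theorem~\ref{thm:DRTPathExt}; it is quoted from Zhang's thesis~\cite{Zhang2017} as background and serves as motivation for the stronger Theorem~\ref{thm:2PathsPathExt}. So there is no in-paper proof to compare your proposal against directly.

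Your strategy is nonetheless exactly the one the paper uses in Section~\ref{sec:2PathsPathExt} to establish the generalization Theorem~\ref{thm:2PathsPathExt}: handle single arcs via $\pi_2(T)\ge 1$, invoke Theorem~\ref{thm:path3+Extendable} for longer paths, and rule out the exceptional classes by exhibiting a vertex pair with too few $2$-paths. The paper shows that every $T\in\mathcal{T}_1$ (and every $T\in\mathcal{T}_2$ with $p=3$) contains a pair with $p_2=0$, and that every $T\in\mathcal{T}_2$ with $p\ge 5$ satisfies $\pi_2(T)\le (n-9)/12$. Since a doubly regular $T$ has $\pi_2(T)=(n-3)/4>(n-9)/12$ for all $n$, these computations already carry out your exception check for $\mathcal{T}_1\cup\mathcal{T}_2$ in full, so that part of your plan is not merely plausible but already written down for you.

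The residual gap in your plan is the case $n=7$. The paper's proof of Theorem~\ref{thm:2PathsPathExt} assumes $n\ge 9$ and dismisses $T_0$ on that ground alone, so you cannot borrow that step. You must verify directly that the specific $7$-vertex regular tournament $T_0$ of Figure~\ref{figure:Seven-Exception} is not doubly regular (equivalently, that it is not isomorphic to the Paley tournament $QR_7$, the unique doubly regular tournament on seven vertices). This is a finite inspection and should be quick---a single ordered pair $(u,v)$ in $T_0$ with $p_2(u,v)\neq 1$ suffices---but you have not done it, and neither does the present paper. Your back-up direct-insertion argument is likewise only a sketch; the near-Hamiltonian sub-case you flag is the genuinely delicate one and would need substantially more than the single sentence you give it.
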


Recall that a doubly regular tournament $T$ on $n$ vertices satisfies $i(T)=0$ and $\pi_2(T)=(n-3)/4$. A natural question is whether we can relax one or both of the conditions on $i(T)$ and $\pi_2(T)$ in order to obtain a more general result on path extendability of tournaments.
In Section~\ref{sec:2PathsPathExt},
we will study the exceptional graphs of Theorem~\ref{thm:path3+Extendable} carefully,
and prove the following result,
which improves the above 
result by reducing the lower bound on $\pi_2(T)$ to $(n-9)/12$ and considering regular instead of doubly regular tournaments.

\begin{theorem} \label{thm:2PathsPathExt}
Let $T$ be a regular tournament on $n\geq 9$ vertices. If $\pi_2(T)> (n-9)/12$, then $T$ is path extendable.
\end{theorem}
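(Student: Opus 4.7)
The plan is to reduce the theorem to Theorem~\ref{thm:path3+Extendable} by handling arcs (paths of length $1$) separately and then excluding, via the hypothesis on $\pi_2(T)$, the exceptional tournaments to which that theorem does not apply. Since path extendability is precisely the conjunction of arc extendability and $\{2+\}$-path extendability, the argument naturally splits into these two parts.

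First, for $n\ge 9$ the hypothesis $\pi_2(T)>(n-9)/12\ge 0$ forces $\pi_2(T)\ge 1$. Hence for every arc $uv\in A(T)$ there exists some $w\in V(T)\setminus\{u,v\}$ with $u\to w\to v$, and the path $uv$ extends to $uwv$. This settles all nonhamiltonian paths of length $1$.

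It remains to extend every nonhamiltonian path of length at least $2$. Theorem~\ref{thm:path3+Extendable} asserts that every regular tournament on at least $7$ vertices is $\{2+\}$-path extendable unless it lies in $\mathcal{T}_1\cup\mathcal{T}_2\cup\{T_0\}$. The heart of the proof is therefore to verify that every tournament belonging to one of these exceptional classes satisfies $\pi_2(T)\le (n-9)/12$, so that our hypothesis automatically excludes them. I would do this by unpacking the explicit descriptions of $\mathcal{T}_1$, $\mathcal{T}_2$ and $T_0$ from \cite{ZZBL2017} and, in each case, exhibiting a distinguished vertex pair $\{u,v\}$ whose common successors are confined to a small part of the tournament. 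For the constructions reminiscent of $\mathcal{T}_3$ introduced in the solution of Problem~\ref{prb:pi_2}, the confinement follows from the dominance relations between the parts, and the count of $(u,v)$-$2$-paths is then dictated by the doubly regular substructure inside a single part, giving $\pi_2 = (n-9)/12$ exactly.

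The main obstacle will be the case analysis for the two infinite families $\mathcal{T}_1$ and $\mathcal{T}_2$: one must identify the critical pair $\{u,v\}$ in each construction and carry out the count of $2$-paths precisely, checking that every cross-part pair has more common successors than the chosen intra-part pair. The exceptional tournament $T_0$ is a single graph and only requires a finite verification. Once the inequality $\pi_2(T)\le (n-9)/12$ is established in each exceptional case, combining it with the arc-extension step and Theorem~\ref{thm:path3+Extendable} yields path extendability of $T$.
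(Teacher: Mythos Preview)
Your overall strategy is the paper's: use $\pi_2(T)\ge 1$ to extend arcs, invoke Theorem~\ref{thm:path3+Extendable}, and rule out the exceptional classes $\mathcal{T}_1$, $\mathcal{T}_2$, $T_0$ by showing each forces $\pi_2(T)\le (n-9)/12$. The handling of $T_0$ (excluded by $n\ge 9$) and of $\mathcal{T}_1$ (where the paper exhibits a pair with $p_2(v_0,v_1)=0$) is exactly along the lines you sketch.

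The gap is in your treatment of $\mathcal{T}_2$, and it stems from a misconception about its structure. The classes $\mathcal{T}_1$ and $\mathcal{T}_2$ are \emph{not} built from doubly regular pieces and are not ``reminiscent of $\mathcal{T}_3$''; that class was introduced only as a sharpness example and plays no role in Theorem~\ref{thm:path3+Extendable}. In $\mathcal{T}_2$ the vertex set splits as $V\cup N_0^+\cup N_1^+\cup N_0^-\cup N_1^-$ with a Hamiltonian path on $V=\{v_0,\dots,v_{p-1}\}$, but the remaining arcs inside $\langle V\rangle$ are \emph{arbitrary} subject to regularity of $T$. Consequently there is no canonical ``critical pair'' whose $p_2$ you can compute: for any fixed $\{v_i,v_j\}$ the value of $p_2(v_i,v_j)$ depends on the unspecified arcs.

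The paper therefore does \emph{not} locate a single pair for $\mathcal{T}_2$. Instead it counts all $2$-paths with both endpoints in $P'=v_1\cdots v_{p-2}$: these are confined to $\langle V\rangle$, giving an upper bound, while the lower bound uses the surplus estimate $s(V(P'))\ge\binom{p-2}{2}$ of Lemma~\ref{lem:SurplusRegular}. This surplus term is essential: without it the averaging only yields $4\pi_2(T)\le p-1$, hence $\pi_2(T)\le (n-3)/12$, which does \emph{not} contradict $\pi_2(T)>(n-9)/12$. The surplus correction sharpens this to $4\pi_2(T)\le p-3$, and together with $p\le n/3$ (from $n_0+n_1\ge p$) one obtains the required $\pi_2(T)\le (n-9)/12$. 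So the missing key lemma in your plan is precisely Lemma~\ref{lem:SurplusRegular}.
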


Note that the lower bound on $\pi_2(T)$ %
in the above result is sharp, as shown by the graph class $\mathcal{T}_3$, and recall that $i(T)=0$ for a regular tournament.
Also note that
the negative solution to Problem~\ref{prb:i} eliminates the possibility to %
establish a sufficient condition for path extendability involving $i(T)$ only.

However, by increasing the lower bound on $\pi_2(T)$ in Problem~\ref{prb:pi_2} and Theorem~\ref{thm:2PathsPathExt},
we can obtain the following sufficient condition for path extendability in general tournaments that is based on $\pi_2(T)$ only.


\begin{theorem} \label{thm:pi_2(T)only}
Let $T$ be a tournament on $n$ vertices. If $\pi_2(T) > (7n-10)/36 $, then $T$ is path extendable.
\end{theorem}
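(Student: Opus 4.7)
My plan is to deduce Theorem~\ref{thm:pi_2(T)only} from the companion criterion established elsewhere in the paper---namely that every tournament with $i(T) < 2\pi_2(T)-(n+8)/6$ is path extendable---by verifying this inequality from the single hypothesis $\pi_2(T) > (7n-10)/36$. The regular case $i(T)=0$ is immediate, and in any case Theorem~\ref{thm:2PathsPathExt} already handles it since $(7n-10)/36 > (n-9)/12$; so I focus on $i(T)\ge 1$.

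To bound $i(T)$ in terms of $\pi_2(T)$, I would use double counting at a vertex $u$ attaining $\min\{\delta^+(T),\delta^-(T)\} = (n-1-i(T))/2$, the equality coming from (\ref{eqn:i(T)andDegree}); without loss of generality suppose $d^+(u) = (n-1-i(T))/2$. Every $(u,v)$-$2$-path has its middle vertex in $N^+(u)$, so
\begin{equation*}
(n-1)\,\pi_2(T) \;\le\; \sum_{v \ne u} p_2(u,v) \;=\; \sum_{w \in N^+(u)} d^+(w).
\end{equation*}
The right-hand side splits into arcs lying inside $N^+(u)$ (at most $\binom{d^+(u)}{2}$) plus arcs from $N^+(u)$ to $N^-(u)$ (at most $d^+(u)\,d^-(u)$). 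A mirror-image count starting from a vertex of minimum in-degree, combined with the degree window $d^+(w) \in [(n-1-i(T))/2,\,(n-1+i(T))/2]$, should produce an algebraic inequality linking $i(T)$, $\pi_2(T)$ and $n$ that is sharp enough for what follows.

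Substituting $\pi_2(T) > (7n-10)/36$ into this inequality and solving for $i(T)$, one verifies $i(T) < 2\pi_2(T)-(n+8)/6$, whereupon the companion criterion closes the proof. The hardest step is quantitative calibration: the crude one-sided bound $\pi_2(T) \le (n-1-i(T))/2$ alone only yields path extendability under the stronger hypothesis $\pi_2(T) > (7n+2)/24$, so matching the sharper threshold $(7n-10)/36$ requires either the symmetric in-/out-degree refinement above or a more delicate structural analysis of $2$-paths through the common neighborhoods of pairs of extremal-degree vertices. Once the strengthened bound on $i(T)$ is in hand, the remaining algebra reducing to the companion criterion's hypothesis is routine.
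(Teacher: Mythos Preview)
Your overall strategy is the paper's: derive Theorem~\ref{thm:pi_2(T)only} from Theorem~\ref{thm:pi_2(T)i(T)} by producing an upper bound on $i(T)$ in terms of $\pi_2(T)$ and $n$, then checking $i(T) < 2\pi_2(T) - (n+8)/6$. The gap is in the bound itself.

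Your double count sums $p_2(u,v)$ over \emph{all} $v \ne u$, giving $(n-1)\pi_2(T) \le \sum_{w \in N^+(u)} d^+(w)$, and then bounds the right side either by $\binom{m}{2} + m(n-1-m)$ or by $m\cdot (n-1+i(T))/2$, where $m = d^+(u) = (n-1-i(T))/2$. Neither suffices: carrying the algebra through at the threshold $\pi_2(T) = (7n-10)/36$, the resulting upper bound on $i(T)$ exceeds $2\pi_2(T) - (n+8)/6$ by a quantity that grows quadratically in $n$, so no ``symmetric in/out refinement'' or degree-window tightening can close it. The loss comes from the summands with $v \in N^-(u)$: for such $v$ the middle vertex of a $(u,v)$-$2$-path can be any out-neighbor of $u$, and the implicit bound $p_2(u,v)\le m$ there only recovers $\pi_2(T)\le m$, i.e.\ exactly the crude inequality you already flagged as too weak.

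The paper's fix (Lemma~\ref{lem:i(T)AndPi(T)}) is to \emph{discard} those terms and restrict to $v \in N^+(u)$. For each such $v$, every $(u,v)$-$2$-path has its middle vertex in $N^+(u)$, so $v$ has at least $\pi_2(T)$ in-neighbors inside the subtournament $\langle N^+(u)\rangle$. Summing in-degrees over a tournament on $m$ vertices gives exactly $\binom{m}{2}$, hence $m\,\pi_2(T) \le \binom{m}{2}$ and $m \ge 2\pi_2(T)+1$. This yields $i(T) \le n - 4\pi_2(T) - 3$, and then
\[
2\pi_2(T) - \frac{n+8}{6} - i(T) \;\ge\; 2\pi_2(T) - \frac{n+8}{6} - \bigl(n - 4\pi_2(T) - 3\bigr) \;=\; 6\Bigl(\pi_2(T) - \frac{7n-10}{36}\Bigr) \;>\; 0,
\]
which is precisely the hypothesis of Theorem~\ref{thm:pi_2(T)i(T)} (the auxiliary condition $\pi_2(T) > (n-9)/12$ there follows since $(7n-10)/36 > (n-9)/12$). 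So the missing idea is not a finer estimate of your aggregate sum, but the restriction of the sum to $v\in N^+(u)$ in the first place.
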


Theorem~\ref{thm:pi_2(T)only} can be viewed as an analogue of Theorem~\ref{thm:2pathallpaths} for path extendability, although it does not characterize path extendable tournaments but only provides a sufficient condition in terms of $\pi_2(T)$.




In fact, Theorem~\ref{thm:pi_2(T)only} follows in a rather straightforward way from our next result, involving 
conditions on both $i(T)$ and $\pi_2(T)$.


\begin{theorem} \label{thm:pi_2(T)i(T)}
Let $T$ be a tournament on $n\ge 9$ vertices
with $\pi_2(T)>(n-9)/12$. 
If $i(T)< 2\pi_2(T)-(n+8)/6 $, then $T$ is path extendable.
\end{theorem}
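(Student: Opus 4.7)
The strategy is to reduce Theorem~\ref{thm:pi_2(T)i(T)} to the regular case already handled by Theorem~\ref{thm:2PathsPathExt}. Given $T$ satisfying the hypotheses, I would embed it into a regular tournament $T^{*}$ on $n+m$ vertices by adjoining a set $W$ of $m \le i(T)+1$ new vertices, where $m$ is chosen so that $n+m$ is odd; the orientations of the new arcs (both between $W$ and $V(T)$ and within $W$) are then selected so that every vertex of $T^{*}$ attains out-degree $(n+m-1)/2$. Such a completion is possible because, by~(\ref{eqn:i(T)andDegree}), the imbalance of each $v \in V(T)$ is at most $i(T) \le m-1$, which leaves enough freedom to hit the target out-degree at every vertex.

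Next I would verify that $\pi_2(T^{*}) > (|T^{*}|-9)/12$ so that Theorem~\ref{thm:2PathsPathExt} applies. For $u,v \in V(T)$ one trivially has $p_{2,T^{*}}(u,v) \ge p_{2,T}(u,v) \ge \pi_2(T)$, and for pairs involving a vertex of $W$, distributing the new arcs in a balanced way (so that no old vertex is dominated by or dominates nearly all of $W$) preserves the same lower bound. The arithmetic then works out: combining $\pi_2(T) > (n-9)/12$ with the hypothesis $i(T) < 2\pi_2(T) - (n+8)/6$ gives $m \le i(T)+1 < 12\pi_2(T) - n + 9$, so $\pi_2(T^{*}) \ge \pi_2(T) > (n+m-9)/12 = (|T^{*}|-9)/12$. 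Theorem~\ref{thm:2PathsPathExt} now guarantees that $T^{*}$ is path extendable.

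The last task is to transfer path extendability back to $T$. Given a non-hamiltonian path $P$ of $T$, it is also non-hamiltonian in $T^{*}$ and hence extendable there by some $w \in V(T^{*}) \setminus V(P)$; if $w \in V(T)$ we are done immediately. Otherwise I iterate the extension in $T^{*}$: after at most $|W|=m$ applications all of $W$ has been absorbed, so the next extension must introduce some $v^{*} \in V(T) \setminus V(P)$, producing a path $\widetilde{P}$ of $T^{*}$ from $v_0$ to $v_\ell$ with $V(\widetilde{P}) = V(P) \cup W \cup \{v^{*}\}$. The desired extension of $P$ in $T$ is then recovered by bypassing the $W$-vertices inside $\widetilde{P}$, using direct arcs of $T$ between their $V(T)$-neighbours on $\widetilde{P}$ or, where those arcs point the wrong way, short detours through vertices of $V(P)$.

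I expect the principal obstacle to be this bypass step. Consecutive occurrences of $W$-vertices along $\widetilde{P}$, and single $W$-vertices whose two $V(T)$-neighbours in $\widetilde{P}$ are joined in $T$ by a reversed arc, are the configurations that block a naive shortcut; ruling them out requires choosing the orientations in the construction of $T^{*}$ with care, so that such configurations either cannot arise or can be repaired locally using the plentiful supply of $2$-paths guaranteed by $\pi_2(T) > (n-9)/12$ and the near-regularity enforced by $i(T) < 2\pi_2(T)-(n+8)/6$. I expect this combinatorial case analysis, rather than the reduction itself, to form the bulk of the argument.
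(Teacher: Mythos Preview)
Your reduction strategy is entirely different from the paper's argument, which works directly inside $T$: assuming a non-extendable path $P=u_0\cdots u_{p-1}$, the paper distinguishes whether the inner subpath $P'=u_1\cdots u_{p-2}$ has a hybrid vertex (Lemma~\ref{lem:h(P)} then bounds $h(P)\le i(T)+2$), and in each case sandwiches the total number of $2$-paths between vertices of $P$ (respectively $P'$) between a lower bound from Lemma~\ref{lem:Surplus} and an upper bound forced by non-extendability; combining this with degree-sum estimates on the dominating and dominated sets outside $P$ yields $i(T)\ge 2\pi_2(T)-(n+8)/6$, contradicting the hypothesis.

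Your proposal has a genuine gap exactly where you locate it, and it cannot be closed along the lines you sketch. If $P$ is non-extendable in $T$, then \emph{by definition} there is no $(u_0,u_{p-1})$-path in $T$ on $V(P)\cup\{v\}$ for \emph{any} $v\in V(T)\setminus V(P)$. Your iterated extensions in $T^{*}$ hand you a path $\widetilde P$ on $V(P)\cup W'\cup\{v^{*}\}$ with $W'\subseteq W$, but to finish you need a $(u_0,u_{p-1})$-path in $T$ on exactly $V(P)\cup\{v^{*}\}$---and we have just said that none exists. Concretely, when a segment $awb$ of $\widetilde P$ with $w\in W'$ has $ba\in A(T)$, your ``short detours through vertices of $V(P)$'' are unavailable because every vertex of $V(P)$ already lies on $\widetilde P$, and a detour through some $x\in V(T)\setminus(V(P)\cup\{v^{*}\})$ produces a path on more than $|V(P)|+1$ vertices, which is no longer an extension of $P$. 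None of this depends on how you orient the arcs incident with $W$, since the obstruction is a property of $T$ alone and your construction leaves $T$ untouched. A secondary unproved step is the claim $p_{2,T^{*}}(u,v)\ge\pi_2(T)$ for pairs meeting $W$: with $m$ possibly as small as $1$ or $2$ and $\pi_2(T)$ of order $n/4$, this forces very specific neighbourhoods for each $w\in W$ in $V(T)$, and you have not argued that this is compatible with making $T^{*}$ regular.
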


The idea of posing restrictions on both $i(T)$ and $\pi_2(T)$
reflects a trade-off between allowing $T$ to deviate more from regularity by
increasing the upper bound on $i(T)$, and compensating it by 
increasing the lower bound on $\pi_2(T)$, thereby imposing a stronger condition on $\pi_2(T)$ than in (doubly) regular tournaments.
Theorem \ref{thm:pi_2(T)i(T)} 
not only brings in a new 
combination of conditions, but
these conditions also turn out to be convenient in the structural analysis.

We will prove Theorem~\ref{thm:pi_2(T)i(T)} and the below lemma in Section~\ref{sec:pi_2(T)i(T)},
and show how Theorem~\ref{thm:pi_2(T)only} can be obtained from them.

\begin{lemma} \label{lem:i(T)AndPi(T)}
In a tournament $T$ on $n$ vertices, $i(T)\leq n-4\pi_2(T)-3$.
\end{lemma}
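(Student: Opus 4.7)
The plan is to exploit the structural observation that every $2$-path terminating at a given vertex $v$ must pass through an in-neighbor of $v$. Pick a vertex $v\in V(T)$ attaining the maximum $|d^+(v)-d^-(v)|=i(T)$; by reversing all arcs if necessary we may assume $d^+(v)\ge d^-(v)$, so by \eqref{eqn:i(T)andDegree} the set $B:=N^-(v)$ has cardinality $b=(n-1-i(T))/2$. The key observation is that for every $u\in B$, any $(u,v)$-$2$-path must have the form $u\to w\to v$ with $w\in N^+(u)\cap N^-(v)=N^+(u)\cap B$. Consequently
\[
p_2(u,v)=|N^+(u)\cap B|=d^+_{\langle B\rangle}(u).
\]

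Next I would sum this identity over all $u\in B$. Since the right-hand side counts out-degrees within the subtournament $\langle B\rangle$, which has exactly $\binom{b}{2}$ arcs,
\[
\sum_{u\in B}p_2(u,v)\;=\;\sum_{u\in B}d^+_{\langle B\rangle}(u)\;=\;\binom{b}{2}.
\]
On the other hand, by definition of $\pi_2(T)$ each term on the left is at least $\pi_2(T)$, so
\[
b\,\pi_2(T)\;\le\;\binom{b}{2}\;=\;\frac{b(b-1)}{2},
\]
which (assuming $b\ge 1$, which is forced whenever $\pi_2(T)\ge 1$, since otherwise no $2$-path can end at $v$) simplifies to $b\ge 2\pi_2(T)+1$. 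Substituting $b=(n-1-i(T))/2$ and rearranging yields $i(T)\le n-4\pi_2(T)-3$, as required.

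There is no real obstacle here; the entire argument hinges on the single structural remark that $2$-paths into $v$ are forced to traverse the smaller side $B=N^-(v)$, which allows one to equate a sum of $p_2$-values with the total number of arcs in $\langle B\rangle$. The mild subtlety is the degenerate situation $b=0$, but this occurs only when $v$ is a source, in which case $\pi_2(T)=0$ automatically and the inequality still holds (trivially) whenever it is invoked in the paper, since Theorem~\ref{thm:pi_2(T)i(T)} is only applied under $\pi_2(T)>(n-9)/12\ge 1$.
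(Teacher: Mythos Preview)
Your argument is correct and essentially the same as the paper's: the paper picks a vertex $u$ of minimum out-degree $(n-1-i(T))/2$, notes that every $(u,v)$-$2$-path for $v\in N^+(u)$ has its middle vertex in $\langle N^+(u)\rangle$, and bounds $\pi_2(T)$ by the average in-degree there; you run the dual argument on $N^-(v)$ with out-degrees, arriving at the same inequality $b\ge 2\pi_2(T)+1$. One small correction on the degenerate case: when $b=0$ the inequality does \emph{not} hold (a transitive tournament has $i(T)=n-1$ but $\pi_2(T)=0$, so $n-4\pi_2(T)-3=n-3$), so the accurate statement is that $b=0$ is excluded whenever $\pi_2(T)\ge 1$, not that the bound survives trivially.
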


\medskip
Finally we show that almost all tournaments are path extendable.
Its proof we present here is a
straightforward application of Theorem \ref{thm:pi_2(T)only}.

In an early paper, Reid and Beineke proved that almost all tournaments are strong, and
they noted
the following stronger result.

\begin{theorem} (Reid and Beineke \cite{ReiBei1978}) \label{thm:T3-Cycle}
In almost all tournaments, every pair of vertices
lies on a $3$-cycle.
\end{theorem}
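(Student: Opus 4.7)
The plan is to apply a standard first-moment (union bound) argument in the uniform random tournament model, in which the orientation of each of the $\binom{n}{2}$ arcs is chosen independently with probability $1/2$. Recall that the phrase ``almost all tournaments'' means that the probability that a random tournament on $n$ vertices has the stated property tends to $1$ as $n \to \infty$; we will show the complementary probability decays exponentially.

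First, I would fix an unordered pair $\{u,v\}$ and condition on the orientation of the arc between them; by symmetry we may assume $u \to v$. For any third vertex $w \in V(T) \setminus \{u,v\}$, the triple $\{u,v,w\}$ forms a $3$-cycle precisely when $v \to w$ and $w \to u$, an event of probability $1/4$. Since distinct choices of $w$ involve disjoint pairs of arcs, these events are mutually independent, so the conditional probability that no $w$ completes a $3$-cycle with $\{u,v\}$ equals $(3/4)^{n-2}$. The same bound holds in the other symmetric case $v \to u$, so unconditionally the probability that the pair $\{u,v\}$ lies on no $3$-cycle is exactly $(3/4)^{n-2}$.

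Next, I would union-bound over the $\binom{n}{2}$ unordered pairs of vertices, obtaining
\begin{equation*}
\Pr\bigl[\,\text{some pair of vertices lies on no $3$-cycle}\,\bigr] \;\le\; \binom{n}{2}\left(\frac{3}{4}\right)^{n-2},
\end{equation*}
which tends to $0$ as $n \to \infty$. Taking complements yields the theorem.

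There is no real obstacle here; the argument is essentially routine. The only points requiring some care are the conditioning on the orientation of $uv$ so that the per-$w$ events remain jointly independent, and the observation that the exponentially small factor $(3/4)^{n-2}$ easily overwhelms the polynomially many pairs, so that the union bound closes without further refinement.
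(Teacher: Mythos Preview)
Your argument is correct and is the standard elementary proof. Note, however, that the paper does not actually supply its own proof of Theorem~\ref{thm:T3-Cycle}: the result is quoted from Reid and Beineke~\cite{ReiBei1978} and used only as motivation. What the paper \emph{does} prove is the much stronger Theorem~\ref{thm:RandomTPi_2}, which for $p=1/2$ gives $\pi_2(T)\ge(1/4-\varepsilon)n$ almost surely; since the conclusion of Theorem~\ref{thm:T3-Cycle} is equivalent to $p_2(v,u)\ge 1$ for every arc $uv$, i.e.\ to $\pi_2(T)\ge 1$, it is an immediate corollary. The paper's route to the stronger statement is a multiplicative Chernoff bound on $p_2(u,v)$ followed by a union bound over all ordered pairs, whereas your exact computation of the per-pair failure probability $(3/4)^{n-2}$ followed by a union bound is more elementary and perfectly sufficient for Theorem~\ref{thm:T3-Cycle} itself; the Chernoff machinery only buys the quantitative $\Omega(n)$ lower bound on $\pi_2$ that the paper needs for Theorem~\ref{thm:RandomTPathExt}.
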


The conclusion of Theorem \ref{thm:T3-Cycle} is equivalent to
the statement
that
for every arc $uv$, $p_2(v,u)\ge 1$.
%
%
%
We prove that a stronger result holds for the more general class of random oriented graphs.

Fixing $0 \le p \le 1/2$, consider the model $\overrightarrow{G}_{n,p}$
of
all %
random
oriented graphs $D$ on $n$ vertices,
in which
for any vertex pair $\{u,v\}$
with probability $p$ we have the arc $uv$,
with probability $p$ we have the arc $vu$,
and with probability $1-2p$ we have no arc between $u$ and  $v$.
When $p=1/2$, $\overrightarrow{G}_{n,p}$ becomes the model for random tournaments on $n$ vertices.
%
%
%
In Theorem~5.9 of \cite{Zhang2017}, Zhang confirmed that there exists a constant $c=c(p)$
such that almost all $D\in \overrightarrow{G}_{n,p}$
satisfy that $\pi_2(D)\ge c n$.  However, an explicit $c(p)$ is not given there.
In Section~\ref{sec:RandomTPi_2}, we use a Chernoff bound to prove the following theorem,
which
shows that $\pi_2(D)$
is arbitrarily close to its expectation.

\begin{theorem} \label{thm:RandomTPi_2}
For any $0< \varepsilon < p^2$, almost all oriented
graphs $D\in \overrightarrow{G}(n,p)$ satisfy that $\pi_2(D) \ge (p^2-\varepsilon)n$.
\end{theorem}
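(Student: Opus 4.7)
The plan is to apply a Chernoff-type concentration inequality to each ordered pair of vertices and then take a union bound over all such pairs. Fix distinct $u,v\in V(D)$, and for each $w\in V(D)\setminus\{u,v\}$ let $X_w$ be the indicator of the event that both $uw$ and $wv$ are arcs of $D$. Under $\overrightarrow{G}(n,p)$ the statuses of distinct unordered pairs of vertices are sampled independently, so $X_w\sim \mathrm{Bernoulli}(p^2)$. Moreover, as $w$ ranges over $V(D)\setminus\{u,v\}$ the two pairs $\{u,w\}$ and $\{w,v\}$ that determine $X_w$ are disjoint from the pairs $\{u,w'\},\{w',v\}$ that determine $X_{w'}$ for any $w'\neq w$. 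Hence the indicators $\{X_w\}$ are mutually independent, and $p_2(u,v)=\sum_w X_w$ is a sum of $n-2$ independent $[0,1]$ random variables with mean $(n-2)p^2$.

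Given $\varepsilon\in(0,p^2)$, for $n$ sufficiently large we have $(n-2)p^2-(p^2-\varepsilon)n=\varepsilon n-2p^2\ge \varepsilon n/2$, so the threshold $(p^2-\varepsilon)n$ lies at least $\varepsilon n/2$ below the mean. A one-sided Hoeffding bound then yields
\begin{equation*}
\Pr\bigl[\,p_2(u,v)<(p^2-\varepsilon)n\,\bigr]\le \exp\!\left(-\frac{2(\varepsilon n/2)^2}{n-2}\right)\le \exp\!\left(-\frac{\varepsilon^2 n}{4}\right).
\end{equation*}

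Finally I would apply a union bound over the at most $n(n-1)$ ordered pairs $(u,v)$ of distinct vertices:
\begin{equation*}
\Pr\bigl[\,\pi_2(D)<(p^2-\varepsilon)n\,\bigr]\le n(n-1)\exp\!\left(-\frac{\varepsilon^2 n}{4}\right)\longrightarrow 0
\end{equation*}
as $n\to\infty$, which is exactly the statement that almost all $D\in\overrightarrow{G}(n,p)$ satisfy $\pi_2(D)\ge (p^2-\varepsilon)n$. The argument is essentially mechanical once independence is noticed; the only mild obstacle is to confirm carefully that the edge-pairs controlling $X_w$ and $X_{w'}$ are disjoint for $w\neq w'$, which is what justifies an exponential-tail concentration inequality rather than a weaker Chebyshev/second-moment estimate and supplies the needed slack for the union bound to close.
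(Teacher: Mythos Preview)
Your proof is correct and follows essentially the same approach as the paper: verify that the indicators $X_w$ are mutually independent (since the underlying unordered pairs $\{u,w\},\{w,v\}$ are distinct as $w$ varies), apply an exponential concentration inequality to $p_2(u,v)$, and finish with a union bound over the $n(n-1)$ ordered pairs. The only cosmetic difference is that the paper invokes the multiplicative Chernoff bound $\Pr\{X<(1-\delta)\mu\}<\bigl(e^{-\delta}/(1-\delta)^{1-\delta}\bigr)^{\mu}$ rather than Hoeffding's inequality, which leads to slightly different bookkeeping but the same conclusion.
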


And in the same section, using Theorem \ref{thm:pi_2(T)only} and Theorem \ref{thm:RandomTPi_2}, we conclude
that random tournaments are
almost surely
path extendable.

\begin{theorem} \label{thm:RandomTPathExt}
Almost all tournaments are path extendable.
\end{theorem}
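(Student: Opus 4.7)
The plan is to deduce Theorem \ref{thm:RandomTPathExt} as an immediate consequence of the deterministic sufficient condition in Theorem \ref{thm:pi_2(T)only} combined with the probabilistic estimate in Theorem \ref{thm:RandomTPi_2}. The uniform model of random tournaments on $n$ vertices is precisely the specialization of $\overrightarrow{G}(n,p)$ to $p=1/2$, since under this choice each unordered pair of vertices receives one of the two possible arcs independently with probability $1/2$ and no pair is left unoriented. Setting $p=1/2$ in Theorem \ref{thm:RandomTPi_2} yields that for every admissible $\varepsilon\in(0,1/4)$, almost all tournaments $T$ on $n$ vertices satisfy $\pi_2(T)\ge(1/4-\varepsilon)n$.

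The second step is to check that this asymptotic lower bound beats the threshold $(7n-10)/36$ required by Theorem \ref{thm:pi_2(T)only}. Writing $1/4=9/36$, the slack is $1/4-7/36=1/18>0$, which leaves ample room for $\varepsilon$. For concreteness I would take $\varepsilon=1/36$, so that $1/4-\varepsilon=2/9=8/36$, and then compute $(2/9)n-(7n-10)/36=(n+10)/36$, which is positive for every $n\ge 1$. Therefore the event $\pi_2(T)\ge(2/9)n$ is already contained in the event $\pi_2(T)>(7n-10)/36$, so by Theorem \ref{thm:pi_2(T)only} every such tournament is path extendable.

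Combining the two steps, the probability that a random tournament $T$ on $n$ vertices satisfies the hypothesis of Theorem \ref{thm:pi_2(T)only} tends to $1$ as $n\to\infty$, and on this event $T$ is path extendable; this is exactly the conclusion of Theorem \ref{thm:RandomTPathExt}. There is no real obstacle in this argument beyond the elementary numerical check $1/4>7/36$ that guarantees a valid choice of $\varepsilon$; all substantive work is carried out in Theorem \ref{thm:pi_2(T)only} (a deterministic structural result) and Theorem \ref{thm:RandomTPi_2} (a Chernoff-type concentration bound for $p_2(u,v)$ across all ordered pairs).
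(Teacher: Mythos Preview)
Your proof is correct and follows essentially the same route as the paper: specialize $\overrightarrow{G}(n,p)$ to $p=1/2$, apply Theorem~\ref{thm:RandomTPi_2} to get $\pi_2(T)\ge(1/4-\varepsilon)n$ almost surely, and observe that for a suitable $\varepsilon$ this exceeds the threshold $(7n-10)/36$ of Theorem~\ref{thm:pi_2(T)only}. The only cosmetic difference is that the paper takes any $\varepsilon<1/18$ while you fix $\varepsilon=1/36$ and carry out the explicit numerical check $(2/9)n-(7n-10)/36=(n+10)/36>0$.
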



A key ingredient in this work is the
newly introduced
concept
 of surplus 
 (see Section~\ref{sec:Surplus}). 
 This concept is based on the following considerations.
Note that even in a doubly regular tournament, $p_2(u,v) \ne p_2(v,u)$ for every vertex pair $\{u,v\}$.
And in a general tournament, $p_2(u,v)$
can assume more different values as
 $\{u, v\}$ runs over all vertex pairs.
That is to say, there
might
exist a lot of vertex pairs $\{u,v\}$ 
with
$p_2(u,v) > \pi_2(u,v)$.
It turns out that, in order to estimate the total number of $2$-paths in a tournament $T$, or in part of $T$,
it
is
easier to consider $p_2(u,v)$ and $p_2(v,u)$ together, rather than
to
consider them separately.
Based on this,
we define
the
surplus
of a vertex pair $\{u,v\}$, obtained by subtracting the global lower bound
$2\pi_2(T)$ from $p_2(u,v)+p_2(v,u)$.
The surplus of a vertex set $U$ is then defined to be the sum of the
surpluses
of all vertex pairs
$\{u,v\}$ 
in $U$.
In Section~\ref{sec:Surplus}, we estimate the possible surplus 
of a vertex pair $\{u,v\}$, and
obtain an infimum for the surplus of a vertex set.
Both are used several times in the proofs that follow.

The rest of the paper
is organized as follows.
In Section~\ref{sec:Surplus}, we
formally
introduce the concept of surplus
and prove Theorem~\ref{thm:pi_2Supremum}.
In Section~\ref{sec:2PathsPathExt}, we study path extendability in regular tournaments and prove Theorem~\ref{thm:2PathsPathExt}.
In Section~\ref{sec:pi_2(T)i(T)}, we study conditions on $i(T)$ and $\pi_2(T)$ for path extendability,
and prove Theorem~\ref{thm:pi_2(T)i(T)}, Lemma~\ref{lem:i(T)AndPi(T)} and Theorem~\ref{thm:pi_2(T)only}.
In Section~\ref{sec:RandomTPi_2}, we investigate $2$-paths and path extendability in random oriented graphs, and prove
Theorem~\ref{thm:RandomTPi_2} and Theorem~\ref{thm:RandomTPathExt}.
In Section~\ref{sec:Final}, we conclude the paper with some final discussion.

\section{Surplus and supremum of $\boldsymbol{\pi_2(T)}$; proof of Theorem~\ref{thm:pi_2Supremum}} \label{sec:Surplus}


Let $u$ and $v$ be two distinct vertices in a tournament $T$.
We define the \emph{surplus of $\{u,v\}$} by:
$$s(u,v)=p_2(u,v)+p_2(v,u)-2\pi_2(T).$$

It is obvious that $s(u,v)\ge 0$ and that it measures
the excess 
of $2$-paths between a vertex pair in both directions
with regard to the minimum 
$2\pi_2(T)$.

Suppose $u\rightarrow v$. Then
\begin{equation} \label{eqn:UdominatesV}
p_2(u,v)-p_2(v,u)=d^+(u)-d^+(v)-1.
\end{equation}
This follows from counting the cardinalities of the four different sets of common neighborhoods of $u$ and $v$.
Let $x_1=p_2(u,v)$ denote the number of vertices in $V(T)\setminus\{u,v\}$ that are out-neighbors of $u$ and in-neighbors of $v$,
$x_2$ the number of vertices in $V(T)\setminus\{u,v\}$ that are common out-neighbors of $u$ and $v$,
$x_3$ the number of vertices in $V(T)\setminus\{u,v\}$ that are common in-neighbors of $u$ and $v$,
and $x_4=p_2(v,u)$ the number of vertices in $V(T)\setminus\{u,v\}$ that are in-neighbors of $u$ and out-neighbors of $v$.
Then, $p_2(u,v)-p_2(v,u)=x_1-x_4$, whereas $d^+(u)-d^+(v)=x_1+1-x_4$. Hence, the above equality.

Similarly, if $v\rightarrow u$, we get
\begin{equation} \label{eqn:UdominatedByV}
p_2(u,v)-p_2(v,u)=d^+(u)-d^+(v)+1.
\end{equation}

Now, suppose $d^+(u)\ge d^+(v)$. We discuss three cases.
%

Case 1. $d^+(u)=d^+(v)$. 
Let us further assume that $u\rightarrow v$.
Then, by (\ref{eqn:UdominatesV}), we have $p_2(v,u)=p_2(u,v)+1 \ge \pi_2(T)+1$.
Hence, $s(u,v)\ge 1$.

Case 2. 
%
$d^+(u)=d^+(v)+1$. If $u\rightarrow v$,
then by (\ref{eqn:UdominatesV}), we have $p_2(v,u)=p_2(u,v)$.
Thus if $p_2(u,v)=p_2(v,u)=\pi_2(T)$, we have $s(u,v)=0$.
If $v\rightarrow u$, then by (\ref{eqn:UdominatedByV}), we have
$p_2(v,u)=p_2(u,v)+2 \ge \pi_2(T)+2$, and  $s(u,v)\ge 2$.
%

Case 3. $d^+(u)\ge d^+(v)+2$. If $u\rightarrow v$,
then by (\ref{eqn:UdominatesV}), we have $p_2(u,v)=p_2(v,u)+1 \ge \pi_2(T)+1$, and
 $s(u,v) \ge 1$.
If $v\rightarrow u$, then by (\ref{eqn:UdominatedByV}), we have
$p_2(v,u)=p_2(u,v)+3 \ge \pi_2(T)+3$, and $s(u,v)\ge 3$.
%

Using the above facts,
it is easy to prove the next lemma.

\begin{lemma} \label{lem:VSurplus0}
Let $u$ and $v$ be two distinct vertices in a tournament $T$ with $d^+(u) \ge d^+(v)$.
If $u\rightarrow v$, then $s(u,v)\ge |d^+(u)-d^+(v)-1| \ge 0$.
If $v\rightarrow u$, then $s(u,v)\ge |d^+(u)-d^+(v)+1| \ge 1$.
In particular, if $s(u,v)=0$, then $d^+(u)-d^+(v)=1$ and $u\rightarrow v$.
\end{lemma}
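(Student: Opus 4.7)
The plan is to derive the lemma directly from the two identities (\ref{eqn:UdominatesV}) and (\ref{eqn:UdominatedByV}) established just before the case analysis, combined with the universal bound $p_2(x,y) \ge \pi_2(T)$. I will set $\delta := d^+(u) - d^+(v) \ge 0$ and split on the direction of the arc between $u$ and $v$; the goal is to convert the three-case analysis already in the text into a uniform algebraic bound in $\delta$.

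First, suppose $u \rightarrow v$. Identity (\ref{eqn:UdominatesV}) becomes $p_2(u,v) - p_2(v,u) = \delta - 1$. If $\delta \ge 1$, this gives $p_2(u,v) \ge p_2(v,u) + \delta - 1 \ge \pi_2(T) + \delta - 1$, whence
\[
s(u,v) = p_2(u,v) + p_2(v,u) - 2\pi_2(T) \ge \delta - 1.
\]
If $\delta = 0$, the identity rearranges to $p_2(v,u) = p_2(u,v) + 1 \ge \pi_2(T) + 1$, so $s(u,v) \ge 1$. In both subcases $s(u,v) \ge |\delta - 1| = |d^+(u) - d^+(v) - 1|$, which is the first inequality of the lemma.

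Next, suppose $v \rightarrow u$. Identity (\ref{eqn:UdominatedByV}) yields $p_2(u,v) = p_2(v,u) + \delta + 1 \ge \pi_2(T) + \delta + 1$, so $s(u,v) \ge \delta + 1 \ge 1$. Since $\delta + 1 > 0$, the right-hand side equals $|d^+(u) - d^+(v) + 1|$, giving the second inequality together with the lower bound $1$. The ``in particular'' clause then follows immediately: if $s(u,v) = 0$, the $v \rightarrow u$ bound rules out $v \rightarrow u$, so $u \rightarrow v$, and then $|\delta - 1| = 0$ forces $\delta = 1$.

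The only delicate point, and the one I would flag as the main obstacle, is the subcase $\delta = 0$ under $u \rightarrow v$: naively applying $p_2(u,v) \ge \pi_2(T)$ to (\ref{eqn:UdominatesV}) only yields $s(u,v) \ge -1$, which is vacuous. The absolute value on the right-hand side of the lemma signals that one must instead exploit the \emph{reflected} inequality $p_2(v,u) \ge \pi_2(T)$, picking whichever of the two lower bounds corresponds to the sign of $p_2(u,v) - p_2(v,u)$. Once this is handled, no new combinatorial ingredient beyond the two identities is required, and the remaining arithmetic is routine.
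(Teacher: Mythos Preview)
Your proof is correct and follows essentially the same approach as the paper: the paper's ``proof'' is just the sentence ``Using the above facts, it is easy to prove the next lemma,'' pointing to the three-case analysis on $d^+(u)-d^+(v)$ that precedes it, and you have written out that argument explicitly, merely reorganizing by splitting first on the arc direction and then on $\delta$ rather than the other way around. The one point you single out---choosing which of $p_2(u,v)\ge\pi_2(T)$ or $p_2(v,u)\ge\pi_2(T)$ to invoke according to the sign of $p_2(u,v)-p_2(v,u)$---is exactly what the paper's case split accomplishes.
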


We define
the
\emph{surplus of a vertex set} $W\subseteq V(T)$,  denoted by $s(W)$,
as the sum of the surpluses of all vertex pairs $\{u,v\}$ in $W$.
Thus the number of $2$-paths in $T$ with end-vertices in $W$ is
$$2{|W| \choose 2}\pi_2(T)+s(W).$$

For any $u,v\in W$, by Lemma~\ref{lem:VSurplus0},
only if the out-degree of $u$ and $v$ differ by $1$ it is possible that $s(u,v)=0$.
Thus, to estimate the number of vertex pairs $\{u,v\}$ with $s(u,v)=0$,
we firstly calculate the vertex pairs whose out-degrees differ by $1$.
Let $n_i$ be the order of the set $\{v\mid v\in W, d^+_T(v)=i\}$ for $0\le i \le n-1$.
Then the number of vertex pairs $\{u,v\}$  in $W$ whose out-degrees differ by $1$ is

\begin{equation} \label{eqn:NumberOfSurplus0AtMost}
\sum_{i=0}^{n-2} n_in_{i+1}
\le \sum_{\substack{i=0 \\ i \text{ is odd}}}^{n-1} n_i \sum_{\substack{i=0 \\ i \text{ is even}}}^{n-1} n_{i}
= \sum_{\substack{i=0 \\ i \text{ is odd}}}^{n-1} n_i  (|W|-\sum_{\substack{i=0 \\ i \text{ is odd}}}^{n-1} n_i )
\le \Big \lfloor \frac{|W|}{2}\Big \rfloor \Big \lceil \frac{|W|}{2}\Big \rceil.
 \end{equation}

For any of the other (at least ${|W| \choose 2} - \lfloor |W|/2\rfloor \lceil |W|/2\rceil$)
vertex pairs $\{u, v\}$ whose out-degrees are equal or differ by at least $2$, we have $s(u,v) \ge 1$. Therefore,
\begin{equation} \label{eqn:UpperBoundSurplus}
s(W)=\sum_{\substack{u,v\in W\\ u\ne v}}s(u,v) \ge {|W| \choose 2} - \Big\lfloor \frac{|W|}{2}\Big\rfloor \Big\lceil \frac{|W|}{2}\Big\rceil.
\end{equation}

Now we use (\ref{eqn:UpperBoundSurplus}) to prove Theorem~\ref{thm:pi_2Supremum}.

\begin{proof} [\textit{Proof of Theorem \ref{thm:pi_2Supremum}}]
\noindent
Let $n$ be the number of vertices in $T$.
When $n\le 2$, there is no $2$-path in $T$.
Therefore, we may assume that $n\ge 3$.
We distinguish two cases according to the parity of $n$.

\medskip
\noindent
\textbf{Case 1. $\boldsymbol{n}$ is odd.}

\noindent
Considering any vertex $w\in V(T)$, the number of $2$-paths that go through $w$ is
\begin{equation} \label{eqn:IndegreeoutdegreeOdd} \nonumber
N^+(w)\cdot N^-(w)\le \Big (\frac{n-1}{2}\Big )^2.
\end{equation}
Therefore, the total number of $2$-paths in $T$ is at most $n(n-1)^2/4$. By (\ref{eqn:UpperBoundSurplus}),
\begin{equation} \label{eqn:CalAll2Paths} \nonumber
\begin{split}
 n(n-1)\pi_2(T) + {n \choose 2} - \Big \lfloor \frac{n}{2}\Big \rfloor \Big\lceil \frac{n}{2} \Big\rceil
&= n(n-1)\pi_2(T) + {n \choose 2} - \frac{(n-1)(n+1)}{4} \\
&\le n(n-1)\pi_2(T) +s(V(T)) \\
&= \sum_{\substack{u,v\in V(T) \\ u\neq v}} p_2(u,v) \\
 &\le \frac{n(n-1)^2}{4},
 \end{split}
\end{equation}
from which we get
$$4\pi_2(T) \le n-3+\frac{n+1}{n}=n-2+\frac{1}{n}.$$

Since $1/n <1$, $4\pi_2(T)$ is even and $n$ is odd, we have $4\pi_2(T)\le n-3$ and thus $\pi_2(T)\le (n-3)/4$.

\medskip
\noindent
\textbf{Case 2. $\boldsymbol{n}$ is even.}

\noindent
Similarly to Case 1, considering any vertex $w\in V(T)$, the number of $2$-paths that go through $w$ is
\begin{equation} \label{eqn:IndegreeoutdegreeEven}
N^+(w)\cdot N^-(w)\le \frac{n}{2}\Big (\frac{n}{2}-1 \Big )=\frac{n(n-2)}{4},
\end{equation}
and the total number of $2$-paths in $T$ is at most $n^2(n-2)/4$.
By (\ref{eqn:UpperBoundSurplus}),
the total number of $2$-paths in $T$ is at least $$n(n-1)\pi_2(T)+{n \choose 2} - \Big(\frac{n}{2}\Big)^2.$$
Therefore
\begin{equation} \label{eqn:CallAll2PathsEven}
n(n-1)\pi_2(T)+{n \choose 2} - \Big(\frac{n}{2}\Big)^2
 \le  \sum_{\substack{u,v\in V(T) \\ u\neq v}} p_2(u,v) \le \frac{n^2(n-2)}{4},
\end{equation}
$\noindent$from which we obtain $\pi_2(T)\le (n-2)/4$.

If $\pi_2(T)= (n-2)/4$, then all inequalities in (\ref{eqn:IndegreeoutdegreeEven})
 and (\ref{eqn:CallAll2PathsEven}) are in fact equalities.
By equality in (\ref{eqn:IndegreeoutdegreeEven}),
$T$ must be a tournament with $n/2$ vertices of out-degree $n/2$ and $n/2$ vertices of out-degree $n/2-1$
(such a tournament is called \emph{almost regular}).
And in order that the first equality holds in (\ref{eqn:CallAll2PathsEven}),
for any vertex pair $\{u,v\}$ with $d^+(u)=n/2$ and $d^+(v)=n/2-1$, we must have $s(u,v)=0$.
By Lemma \ref{lem:VSurplus0}, this implies $u\rightarrow v$.
Hence, every vertex of out-degree $n/2$
dominates every vertex of out-degree $n/2-1$, and $T$ is not strong, a contradiction to $\pi_2(T)\ge 1$.
So $\pi_2(T)<(n-2)/4$.
Since $n$ is even, $\pi_2(T)\le (n-4)/4 < (n-3)/4$. This completes the proof of Theorem \ref{thm:pi_2Supremum}.
\end{proof}

Let $\langle W \rangle$ be the subtournament of $T$ induced by $W$.
The below lemma is a structural result on $\langle W \rangle$ when equality holds in (\ref{eqn:UpperBoundSurplus}).

\begin{lemma} \label{lem:Surplus}
Let $W$ be a subset of $V(T)$ for a tournament $T$.
Then $$s(W)\ge {|W| \choose 2} - \Big\lfloor \frac{|W|}{2}\Big\rfloor \Big\lceil \frac{|W|}{2}\Big\rceil.$$
Furthermore, equality holds only when one of the following holds.

(i) $|W|\le (|T|+1)/2$,
$W$ can be partitioned into $W_0$ and $W_1$, such that
$\{|W_0|,|W_1|\}=\{\lfloor |W|/2\rfloor, \lceil |W|/2 \rceil\}$,
$W_0$ consists of vertices of degree $d$ in $T$, $W_1$
consists of vertices of degree $d+1$ in $T$ for a certain integer $d$,
and $W_1 \rightarrow W_0$.

(ii) $|W|\le |T|/3+2$,
$W$ can be partitioned into $W_0$, $W_1$ and $W_2$, such that $|W_0|, |W_1|, |W_2| > 0$,
$\{|W_0|+|W_2|,|W_1|\}=\{\lfloor |W|/2\rfloor, \lceil |W|/2 \rceil\}$,
$W_0$, $W_1$, and $W_2$ consist of vertices of degree $d-1$, $d$ and $d+1$ in $T$
respectively, for a certain integer $d$, and $W_i \rightarrow W_j$ for $i>j$.
\end{lemma}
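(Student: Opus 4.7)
My plan is to adopt the inequality $s(W) \geq \binom{|W|}{2} - \lfloor |W|/2 \rfloor \lceil |W|/2 \rceil$ from (\ref{eqn:UpperBoundSurplus})---which is stated for a general $W \subseteq V(T)$ in the preceding discussion---and to focus the argument on the equality case.

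To classify equality, I would trace tightness through each step that led to (\ref{eqn:UpperBoundSurplus}). It forces (a) every pair in $W$ whose out-degrees differ by exactly $1$ has $s(u,v)=0$; (b) every other pair has $s(u,v)=1$; and (c) the bound in (\ref{eqn:NumberOfSurplus0AtMost}) is tight for $W$. By Lemma~\ref{lem:VSurplus0} and the three-case analysis preceding it, (b) excludes out-degree gaps $\geq 3$ and reversed arcs across a gap of $2$, so the out-degrees in $W$ lie in at most three consecutive integers and $W_i \to W_j$ whenever $i > j$; (a) orients each gap-$1$ arc from the higher-degree endpoint to the lower. Tightness of (c) becomes $|W_0||W_1| = \lfloor |W|/2 \rfloor \lceil |W|/2 \rceil$ in the two-class case and $|W_1|(|W_0|+|W_2|) = \lfloor |W|/2 \rfloor \lceil |W|/2 \rceil$ in the three-class case, which together with $\sum_i |W_i|=|W|$ force the balanced partitions stated in (i) and (ii). The remaining degenerate configurations---a single degree, or only two non-consecutive degrees---violate (c) except for $|W| \leq 1$.

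The bounds on $|W|$ exploit the finer identities forced by (a) and (b). In case (i), they give $p_2(u,v)=p_2(v,u)=\pi_2(T)$ for every $(u,v) \in W_1 \times W_0$. Because $W_1 \to W_0$, every intermediate vertex of such a $2$-path lies in $W_0 \cup W_1 \cup (V(T) \setminus W)$, which yields
\[
p_2(u,v) = d^-_{\langle W_0 \rangle}(v) + d^+_{\langle W_1 \rangle}(u) + c(u,v)
\]
with $c(u,v) \geq 0$, and hence $d^-_{\langle W_0 \rangle}(v)+d^+_{\langle W_1 \rangle}(u) \leq \pi_2(T)$ for all such $(u,v)$. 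Choosing $v$ and $u$ that realize the two maxima and using $\max \geq \text{average}$ yields $|W|-2 \leq 2\pi_2(T)$, so $|W| \leq (|T|+1)/2$ by Theorem~\ref{thm:pi_2Supremum}. In case (ii), the same calculation applied to the gap-$2$ pair $(u_2,v_0) \in W_2 \times W_0$ gains an extra $|W_1|$ summand---since $u_2$ dominates all of $W_1$ and $W_1$ dominates $v_0$---giving $d^-_{\langle W_0 \rangle}(v_0)+d^+_{\langle W_2 \rangle}(u_2) \leq \pi_2(T)+1-|W_1|$ and therefore $|W|+|W_1| \leq 2\pi_2(T)+4$; combined with $|W_1| \geq \lfloor |W|/2 \rfloor$ from the balance condition and with Theorem~\ref{thm:pi_2Supremum}, this gives $|W| \leq |T|/3+2$.

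The most delicate step is case (ii): correctly isolating the extra $|W_1|$ summand in $p_2(u_2,v_0)$ requires the full dominance pattern $W_2 \to W_1 \to W_0$ together with $W_2 \to W_0$, and the passage from $|W|+|W_1| \leq 2\pi_2(T)+4$ to $|W| \leq |T|/3+2$ depends on coupling the balance condition $|W_1| \in \{\lfloor |W|/2 \rfloor, \lceil |W|/2 \rceil\}$ with the sharp upper bound $\pi_2(T) \leq (|T|-3)/4$ from Theorem~\ref{thm:pi_2Supremum}. By comparison, the structural classification and the case (i) bound are comparatively mechanical once Lemma~\ref{lem:VSurplus0} is in hand.
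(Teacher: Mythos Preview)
Your proposal is correct and follows essentially the same route as the paper: adopt (\ref{eqn:UpperBoundSurplus}) for the inequality, trace equality through (\ref{eqn:NumberOfSurplus0AtMost}) and Lemma~\ref{lem:VSurplus0} to force at most three consecutive out-degree values with the stated dominance pattern and balance, and then bound $|W|$ in each case by counting $(W_1\to W_0)$-- or $(W_2\to W_0)$--$2$-paths inside $W$ and invoking Theorem~\ref{thm:pi_2Supremum}. The only cosmetic difference is that the paper sums $p_2(u,v)$ over all cross pairs (using the total arc count $\binom{|W_i|}{2}$ in each class), whereas you fix a single extremal pair and use $\max\ge\text{average}$; both extract the identical inequality $\pi_2(T)\ge |W|/2-1$ in case~(i) and $\pi_2(T)+1\ge (|W|+|W_1|)/2-1$ in case~(ii), so the two arguments are equivalent.
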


\begin{proof}
By (\ref{eqn:UpperBoundSurplus}), $s(W)\ge {|W| \choose 2} - \lfloor |W|/2 \rfloor \lceil |W|/2 \rceil.$
For equality to hold, we must have equality in (\ref{eqn:NumberOfSurplus0AtMost}) as well.
In order that all equalities hold in (\ref{eqn:NumberOfSurplus0AtMost}) and (\ref{eqn:UpperBoundSurplus}),
there can be at most three consecutive $n_i$ which are nonzero. Furthermore, the number of vertices of
odd degree and the number of vertices of even degree must be equal or they differ by exactly $1$.

\medskip
\noindent
\textbf{Case 1. There are exactly two consecutive $\boldsymbol{n_i}$ which are nonzero.}

\noindent
Assume that only $n_d$ and $n_{d+1}$ are nonzero.
Let the set of vertices in $W$ with degree $d$ and $d+1$ be $W_0$ and $W_1$, respectively.
Equalities in (\ref{eqn:NumberOfSurplus0AtMost}) and (\ref{eqn:UpperBoundSurplus}) imply that
$n_d=n_{d+1}$ or $|n_d-n_{d+1}|=1$,
that is $\{n_d, n_{d+1}\}=\{\lceil W/2 \rceil, \lfloor W/2 \rfloor \}$.
Furthermore, in order that $s(u,v)=0$ for $u\in W_1$ and $v\in W_0$, we must have $u\rightarrow v$,
hence $W_1 \rightarrow W_0$.

For any arc $u_0u_1$ with $u_0, u_1 \in W_1$ and any $v \in W_0$, we have a $2$-path $u_0u_1v$.
Similarly, for any arc $v_0v_1$ with $v_0, v_1 \in W_0$ and any $u \in W_1$, we have a $2$-path $uv_0v_1$.
Furthermore, in order that equality holds we must have $s(u,v)=0$, and hence $p_2(u,v)=\pi_2(T)$ for
every $u\in W_1$ and $v\in W_0$.
Hence, calculating the total number of $2$-paths from every $u\in W_1$ to every $v\in W_0$, we have

\begin{equation} \nonumber
n_dn_{d+1}\pi_2(T)=\sum_{u\in W_1, v\in W_0} p_2(u,v) \ge \frac{n_{d+1}(n_{d+1}-1)}{2}\cdot n_d + \frac{n_{d}(n_{d}-1)}{2}\cdot n_{d+1}
=n_d n_{d+1}\Big(\frac{|W|}{2}-1\Big).
\end{equation}

So, $\pi_2(T) \ge |W|/2-1$.
However, by Theorem~\ref{thm:pi_2Supremum},
$\pi_2(T) \le (n-3)/4$, so $|W|/2-1 \le (n-3)/4$ and we get $|W| \le (n+1)/2$.
Thus (i) holds.

\medskip
\noindent
\textbf{Case 2. There are three consecutive $\boldsymbol{n_i}$ which are nonzero.}

\noindent
Assume that $n_{d-1}$, $n_d$ and $n_{d+1}$ are nonzero. Let the set of vertices of degree $d-1$, $d$ and $d+1$
be $W_0$, $W_1$ and $W_2$, respectively.
By equalities in (\ref{eqn:NumberOfSurplus0AtMost}),
$n_d=n_{d+1}+n_{d-1}$ or $|n_d-(n_{d+1}+n_{d-1})|=1$.
Hence $\{|W_0|+|W_2|, |W_1|\}=\{\lfloor|W|/2\rfloor, \lceil|W|/2\rceil\}$.
Furthermore, in order that $s(u,v)=0$, for $u\in W_1$ and $v\in W_0$ or $u\in W_2$ and $v\in W_1$,
we must have $u\rightarrow v$, hence $W_2 \rightarrow W_1 \rightarrow W_0$.
Moreover, we need $s(u,v)=1$ for $u\in W_0$ and $v\in W_2$, and by Lemma~\ref{lem:VSurplus0}, $u \rightarrow v$.
So, $W_2\rightarrow W_0$.

For any arc $u_0u_1$ where $u_0,u_1\in W_2$ and $v\in W_0$, we have a $2$-path $u_0u_1v$.
Similarly, for any arc $v_0v_1$ where $v_0,v_1\in W_0$ and $u\in W_2$, we have a $2$-path $uv_0v_1$.
Furthermore, for any $u\in W_2$, $w\in W_1$ and $v\in W_0$, we have a $2$-path $uwv$.
Finally, for every $u\in W_2$ and $v\in W_0$,
$d^+(u)=d^+(v)+2$, and by Lemma \ref{lem:VSurplus0}, $s(u,v)\ge 1$.
In order that equalities in (\ref{eqn:NumberOfSurplus0AtMost}) and (\ref{eqn:UpperBoundSurplus}) hold
we need $s(u,v)=1$, which holds only when $p_2(u,v)=p_2(v,u)+1=\pi_2(T)+1$ .
Therefore, considering the total number of $2$-paths from vertices in $W_2$ to vertices in $W_0$, we have
\begin{equation} \nonumber
\begin{split}
n_{d-1}n_{d+1}(\pi_2(T)+1) &=
\sum_{u\in W_2, v\in W_0} p_2(u,v) \\
&\ge
\frac{n_{d-1}(n_{d-1}-1)}{2}\cdot n_{d+1} + n_{d-1}n_d n_{d+1} + \frac{n_{d+1}(n_{d+1}-1)}{2}\cdot n_{d-1} \\
&= n_{d-1}n_{d+1}\Big(n_d+\frac{n_{d-1}+n_{d+1}}{2}-1\Big) \\
&= n_{d-1}n_{d+1}\Big(n_d+\frac{|W|-n_d}{2}-1\Big) \\
&= n_{d-1}n_{d+1}\Big(\frac{|W|+n_d}{2}-1\Big).
\end{split}
\end{equation}

Therefore
$$\pi_2(T)+1\ge \frac{|W|+ n_d}{2}-1.$$
By Theorem \ref{thm:pi_2Supremum}, $ \pi_2(T) \le (n-3)/4$. Thus we have
$$(n-3)/4+1 \ge (|W|+n_d)/2-1 \ge (|W|+\lfloor |W|/2 \rfloor)/2-1 \ge (|W|+ (|W|-1)/2)/2-1,$$
from which we get
$$|W| \le \frac{n}{3}+2,$$
and (ii) holds. This completes the proof of Lemma~\ref{lem:Surplus}.

\end{proof}

In a regular tournament, the lower bound on $s(W)$ can be improved.

\begin{lemma} \label{lem:SurplusRegular}
Let $W$ be a subset of $V(T)$ for a regular tournament $T$. Then
$$s(W)\ge {|W| \choose 2}.$$
\end{lemma}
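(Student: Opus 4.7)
The plan is to reduce the statement to a pair-by-pair application of Lemma~\ref{lem:VSurplus0}. The regularity assumption collapses the two cases of that lemma into a single uniform lower bound $s(u,v)\ge 1$, after which the result follows by a plain summation.

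First I would observe that a regular tournament $T$ has $n$ odd and $d^+(v)=(n-1)/2$ for every vertex $v$. Hence any two distinct vertices $u,v\in W$ satisfy $d^+(u)=d^+(v)$. Plugging this equality into Lemma~\ref{lem:VSurplus0} gives $s(u,v)\ge |d^+(u)-d^+(v)-1|=1$ when $u\rightarrow v$, and $s(u,v)\ge |d^+(u)-d^+(v)+1|=1$ when $v\rightarrow u$; in either orientation we conclude $s(u,v)\ge 1$.

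Summing over all ${|W| \choose 2}$ unordered pairs in $W$ then yields
$$s(W)=\sum_{\{u,v\}\subseteq W} s(u,v)\ge {|W| \choose 2},$$
as required.

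I do not expect a real obstacle here: the substantive content was already carried out in Case~1 of the surplus discussion preceding Lemma~\ref{lem:VSurplus0}, which established $s(u,v)\ge 1$ whenever the out-degrees of $u$ and $v$ coincide. Conceptually, the improvement over Lemma~\ref{lem:Surplus} simply reflects the fact that in a regular tournament no pair has out-degrees differing by exactly one, so none of the potentially surplus-zero pairs counted in~(\ref{eqn:NumberOfSurplus0AtMost}) are available, and the $\lfloor|W|/2\rfloor\lceil|W|/2\rceil$ correction term vanishes.
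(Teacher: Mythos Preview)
Your proposal is correct and follows essentially the same route as the paper: use regularity to force $d^+(u)=d^+(v)$ for every pair, invoke the surplus discussion (equivalently Lemma~\ref{lem:VSurplus0}) to get $s(u,v)\ge 1$, and sum over all $\binom{|W|}{2}$ pairs. The paper's version is just a terser rendering of exactly this argument.
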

\begin{proof}
Since $d^+(u)=d^+(v)$ for every $u,v\in W$, we have $s(u,v)\ge 1$. Therefore,
$$s(W)=\sum_{\substack{u,v\in W\\ u\ne v}}s(u,v) \ge {|W| \choose 2}.$$
\end{proof}

\section{
Regular Tournaments; Proof of Theorem~\ref{thm:2PathsPathExt}} \label{sec:2PathsPathExt}

Recall that Theorem \ref{thm:path3+Extendable} states that
a regular tournament $T$ with at least $7$ vertices is $\{2+\}$-path extendable,
unless $T\in \mathcal{T}_1$, $T\in \mathcal{T}_2$ or {$T=T_0$}.
The current proof is based on the structure of the exceptional graphs $\mathcal{T}_1$, $\mathcal{T}_2$ and {$T_0$},
which is given below.

\begin{figure}[h]
\centering
\includegraphics[width=0.85\linewidth]{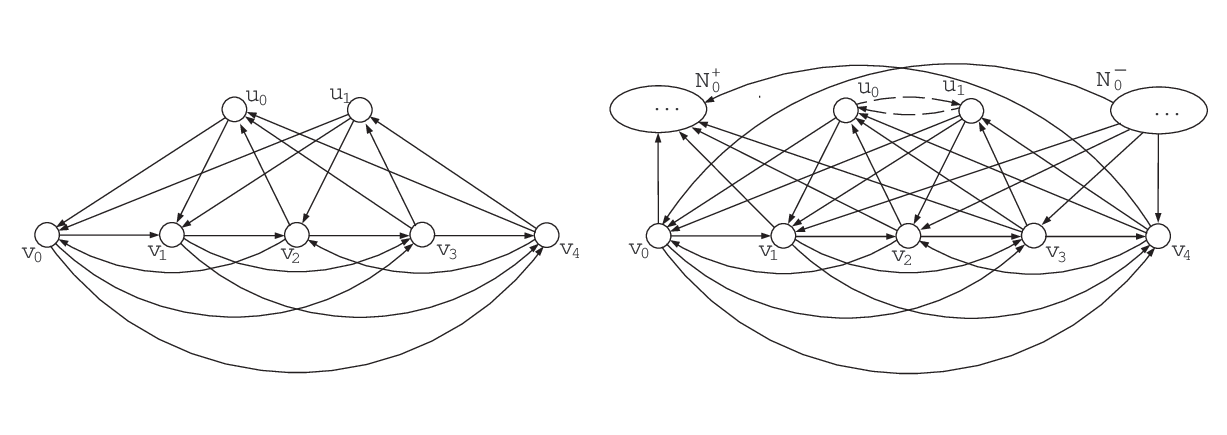}
\caption{The digraph $D_{1}$ (left) and the tournaments in $\mathcal{T}_1$}
\label{figure:TwoHybridVerticesAll}
\end{figure}

Let $D_{1}$ be the digraph at the left in Figure~\ref{figure:TwoHybridVerticesAll}.
Then $\mathcal{T}_1$ is the class of regular tournaments whose vertex set can be partitioned
into four parts $V=\{v_0, v_1, v_2, v_3, v_4\}$, $\{ u_0, u_1\}$, $N_0^+$ and $N_0^-$,
where $\langle V\cup \{u_0, u_1\} \rangle=D_{1}+u_0u_1$ or $D_{1}+u_1u_0$,
$|N_0^+|=|N_0^-|=0$ or $|N_0^+|=|N_0^-|\geq 3$, $V\rightarrow N_0^+$ and $N_0^- \rightarrow V$.
A sketch of the members of the class $\mathcal{T}_1$ is depicted at the right in Figure~\ref{figure:TwoHybridVerticesAll}.
In the sketch there are missing arcs that should be added in order to turn the digraphs into regular tournaments. This might be done in an arbitrary manner, as long as one respects the constraint that
 the resulting tournament is regular.

\begin{figure}[h]
\centering
\includegraphics[width=0.5\linewidth]{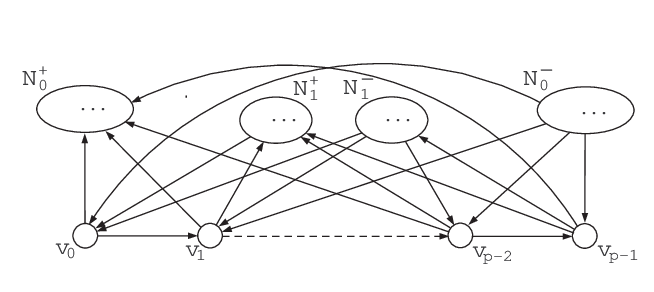}
\caption{The tournaments in $\mathcal{T}_2$}
\label{figure:TwoHybridSets}
\end{figure}

Let $p\ge 3$ be an odd integer, and let $\mathcal{T}_2$ be the set of regular tournaments whose vertex set can be partitioned into five sets $V=\{v_i, 0\leq i \leq p-1\}$, $N_0^+$, $N_1^+$, $N_0^-$ and $N_1^-$,
such that $ |N_0^+|=|N_0^-| = n_0$, $|N_1^+|=|N_1^-|=n_1$, for $n_1\leq (p-1)/2$ and $n_0+n_1 \geq p$, and let $v_0v_1\ldots v_{p-1}$ be a path. Adopting the above convention, let $V\rightarrow N_0^+$, $N_0^-\rightarrow V$, $V\setminus \{v_0\} \rightarrow N_1^+$, $N_1^+\rightarrow v_0$, $N_1^- \rightarrow V\setminus \{v_{p-1}\} $ and $v_{p-1} \rightarrow N_1^-$.
See Figure~\ref{figure:TwoHybridSets} for a sketch of the members of $\mathcal{T}_2$.
As in Figure ~\ref{figure:TwoHybridVerticesAll}, the missing arcs should be added in one way or another in order to turn the digraphs into regular tournaments.

\begin{figure}[h]
\centering
\includegraphics[width=0.27\linewidth]{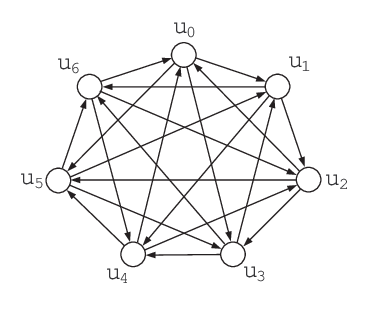}
\caption{The tournament $T_0$}
\label{figure:Seven-Exception}
\end{figure}



Let $T_0$ be the regular tournament on seven vertices depicted in Figure~\ref{figure:Seven-Exception}.

Let $T$ be a regular tournament with $n\ge 9$ vertices and $\pi_2(T)> (n-9)/12 \ge 0$. This
 implies that every path of length $1$ is extendable.
Moreover, by Theorem~\ref{thm:path3+Extendable}, either $T$ is $\{2+\}$-path extendable,
or $T$ belongs to the exceptional graph class $\mathcal{T}_1\cup \mathcal{T}_2 \cup \{T_0\}$.
If $T$ is $\{2+\}$-path extendable, then together with $\pi_2(T)\ge 1$, $T$ is path extendable.
Therefore, we only need to discuss the case that $T$ belongs to the exceptional graphs.

Since $n\ge 9$, $T\ne T_0$.
If $T \in \mathcal{T}_1$, then $T$ contains a path $P=v_0v_1v_2v_3v_4$ as illustrated in Figure~\ref{figure:TwoHybridVerticesAll}.
However, by the definition of $T$ and as shown in Figure~\ref{figure:TwoHybridVerticesAll},
 $\{v_0, v_1\} \rightarrow N_0^+ \cup \{v_3, v_4\}$ ,
 $N_1^+ \rightarrow \{v_0, v_1\}$ and $v_1 \rightarrow v_2 \rightarrow v_0$,
 thus $p_2(v_0,v_1)=0$, contradicting $\pi_2(T)> 0$.

So we may assume that $T\in \mathcal{T}_2$. Consider the path $P=v_0v_1\ldots v_{p-1}$ in $T$.
If $p=3$, then there is no $2$-path from $v_0$ to $v_1$, contradicting $\pi_2(T)> 1$.
Thus we may further assume that $p\ge 5$.

Let $P^\prime=P[v_1, v_{p-2}]$. Then $|V(P^\prime)|=p-2\ge 3$.
Since $V(P^\prime)$ dominates $N_0^+\cup N_1^+$ and is dominated by $N_0^-\cup N_1^-$,
any $2$-paths between vertices in $V(P^\prime)$ must either be in $\langle V(P^\prime) \rangle$ or
go through $v_0$ or $v_1$.
For every $v_i\in V(P^\prime)$, the number of $2$-paths that go through $v_i$
in $\langle V(P^\prime)\rangle$ is at most  $((p-3)/2)^2$.
Thus the total number of $2$-paths in $P^\prime$ is at most $(p-2)((p-3)/2)^2$.
Since $p-2$ is odd, the number of $2$-paths between vertices in $V(P^\prime)$
that go through $v_0$ or $v_{p-1}$ is at most $2((p-1)/2)((p-3)/2)=(p-1)(p-3)/2$.
So, the total number of $2$-paths between vertices in $V(P^\prime)$ is at most

\begin{equation} \nonumber
(p-2)\Big(\frac{p-3}{2}\Big)^2+\frac{(p-1)(p-3)}{2}.
\end{equation}

Since $T$ is regular, by Lemma~\ref{lem:SurplusRegular},
$s(V(P^\prime))\ge {p-2 \choose 2}$, and hence
the total number of $2$-paths among vertices in $V(P^\prime)$ is at least
$(p-2)(p-3)\pi_2(T)+{p-2 \choose 2}$. So, we have
\begin{equation} \nonumber
(p-2)(p-3)\pi_2(T)+{p-2 \choose 2} \le  (p-2)\Big(\frac{p-3}{2}\Big)^2+\frac{(p-1)(p-3)}{2},
\end{equation}
$\noindent$by which we get
$$4\pi_2(T)\le p-3+\frac{2}{p-2}.$$

Since $p\ge 5$, $2/(p-2) \le 1$, and since $\pi_2(T)$ and $p$ are integers, $4\pi_2(T)\le p-3.$
Furthermore, by $n_0+n_1\ge p$ in the definition of $T\in \mathcal{T}_2$, we have $p\le n/3$. Thus, we obtain
$$\pi_2(T)\le \frac{1}{4}(\frac{n}{3}-3)=\frac{n-9}{12},$$
contradicting $\pi_2(T)>(n-9)/12$.

Concluding, we have shown that under the condition $\pi_2(T)>(n-9)/12$, $T$ cannot
be in the class of exceptional graphs, and thereby completed the proof of Theorem~\ref{thm:2PathsPathExt}.

\section{Condition on $\boldsymbol{i(T)}$ and $\boldsymbol{\pi_2(T)}$; 
Proof of Theorem~\ref{thm:pi_2(T)i(T)} and Theorem~\ref{thm:pi_2(T)only}}
\label{sec:pi_2(T)i(T)}

\subsection{Proof of Theorem~\ref{thm:pi_2(T)i(T)}}

Let $P = u_0 u_1 \ldots u_{p-1}$ be a non-extendable path on $p$ vertices in a
tournament $T$, and let $w \in V(T)\setminus V(P)$.
We classify $w$ into three categories. If $w\rightarrow V(P)$, we say that $w$ is a \emph{dominating vertex} of $P$.
If $V(P)\rightarrow w$, we say that $w$ is a \emph{dominated vertex} of $P$. In the other case, $w$ is called a \emph{hybrid
vertex} of $P$. We denote the set of hybrid vertices of $P$ by $H(P)$, and we let $|H(P)|=h(P)$.

Supposing $w$ is a hybrid vertex of $P$, there cannot exist $0\le i < j \le p-1$ such that
$u_i \rightarrow w \rightarrow u_j$, or there must be an integer $i^\prime$ such that $i\le i^\prime \le j-1$ and
$u_{i^\prime} \rightarrow w \rightarrow u_{i^\prime+1}$,
and then $P$ can be extended to $u_0\ldots u_{i^\prime} w u_{i^\prime+1} \ldots u_{p-1}$,
a contradiction. However, there must be arcs from $w$ to $P$ and arcs from $P$ to $w$.
Therefore, there exists $0\le k \le q-2$ such that
$\{u_{k+1}, \ldots, u_{p-1}\} \rightarrow w \rightarrow \{u_0, \ldots, u_k\}$.
We say that $w$ \emph{switches at} $k$.

The concepts of dominating vertex, dominated vertex and hybrid vertex are applicable to any subpath $P_0$ of $P$.
A dominating (dominated) vertex of $P$ remains dominating (dominated) for $P_0$. A hybrid vertex of $P$ may
become dominating, dominated or stay hybrid for $P_0$.
In our proof, we distinguish the cases that $P^\prime=u_1u_2\ldots u_{p-2}$ has a hybrid vertex or not, or
equivalently, whether there exists a vertex $w\in V(T)\setminus V(P)$
which switches at $k_0$ for some $1\le k_0 \le p-3$.
We have the following result when $P^\prime$ has any hybrid vertex.

\begin{lemma} \label{lem:h(P)}
Let $P = u_0 u_1 \ldots u_{p-1}$ be a non-extendable path
on $p\ge 4$ vertices in a
tournament $T$.
Let $P^\prime=u_1u_2\ldots u_{p-2}$.
If $P^\prime$ has a hybrid vertex,
then
$h(P)\leq i(T)+2$.
\end{lemma}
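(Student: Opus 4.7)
The plan is a degree-counting argument at the two endpoints $u_0$ and $u_{p-1}$ of $P$. First I would classify $V(T)\setminus V(P)$ into the $a$ dominating, $b$ dominated, and $h=h(P)$ hybrid vertices. Since $P$ is non-extendable, each hybrid $w$ has a switch index $k_w\in\{0,\ldots,p-2\}$ with $w\to\{u_0,\ldots,u_{k_w}\}$ and $\{u_{k_w+1},\ldots,u_{p-1}\}\to w$; in particular every hybrid vertex is an in-neighbor of $u_0$ and an out-neighbor of $u_{p-1}$. Setting
\[
R_0=|\{j\ge 2:u_0\to u_j\}|,\quad L_0=(p-2)-R_0,
\]
and defining $R_{p-1},L_{p-1}$ symmetrically at $u_{p-1}$, one obtains $d^-(u_0)=a+h+L_0$, $d^+(u_0)=1+b+R_0$, $d^+(u_{p-1})=b+h+R_{p-1}$, and $d^-(u_{p-1})=1+a+L_{p-1}$.

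Next I would apply the degree conditions $d^-(u_0)-d^+(u_0)\le i(T)$ and $d^+(u_{p-1})-d^-(u_{p-1})\le i(T)$, add them, and simplify using $R_0+L_0=R_{p-1}+L_{p-1}=p-2$ to obtain
\[
h\le i(T)+1+(R_0-R_{p-1}).
\]
Therefore the lemma reduces to proving the inequality $R_0-R_{p-1}\le 1$.

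To establish this, I would invoke the hybrid vertex $w_0$ of $P'$. In the main case $w_0\notin V(P)$, the switch index $k_0$ of $w_0$ with respect to $P$ must lie in $\{1,\ldots,p-3\}$, for otherwise $w_0$ would dominate or be dominated by $V(P')$. For each $j\in\{k_0+1,\ldots,p-2\}$ with $u_0\to u_j$, the candidate sequence
\[
u_0\ u_j\ w_0\ u_1\ u_2\ \cdots\ u_{j-1}\ u_{j+1}\ \cdots\ u_{p-1}
\]
is a Hamiltonian path of $\langle V(P)\cup\{w_0\}\rangle$ from $u_0$ to $u_{p-1}$ as soon as the extra arc $u_{j-1}\to u_{j+1}$ is present (with the boundary variant $u_{k_0}\to u_{k_0+2}$ when $j=k_0+1$); non-extendability thus forbids this arc for every such $j$. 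A symmetric scheme inserting $w_0$ immediately before a last vertex $u_i$ adjacent to $u_{p-1}$ should yield analogous forbidden chords on the left half of $P$. Balancing the two resulting families of constraints through the switch point $k_0$ forces each ``surplus'' chord $u_0\to u_j$ to be paired with a chord $u_{p-1}\to u_i$, producing $R_0\le R_{p-1}+1$. The degenerate cases $w_0=u_0$ and $w_0=u_{p-1}$ I would dispatch separately, using that each directly implies $L_0\ge 1$ or $R_{p-1}\ge 1$.

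The hardest step will be the last one: turning the two families of rerouting constraints into the clean inequality $R_0-R_{p-1}\le 1$ requires careful combinatorial bookkeeping of several boundary cases---the cases $j=k_0+1$ versus $j\ge k_0+2$, the direction of the arc between $u_0$ and $u_{p-1}$, and the three distinct positions for $w_0$. If the direct pairing cannot be completed cleanly, one can fall back on refining the degree inequality by incorporating information from $u_{k_0}$ and $u_{k_0+1}$ as well, exploiting the fact that $w_0$ and every hybrid vertex switching at $k_0$ contribute to both $d^-(u_{k_0})$ and $d^+(u_{k_0+1})$.
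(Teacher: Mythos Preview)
Your degree-counting reduction is correct and is exactly what the paper does: the inequality $h\le i(T)+1+(R_0-R_{p-1})$ is equivalent to the paper's computation via $d_P^+(u_0)+d_P^-(u_{p-1})$, and the target $R_0-R_{p-1}\le 1$ is precisely the paper's bound $d_P^+(u_0)+d_P^-(u_{p-1})\le p+1$.

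Where your plan has a genuine gap is in how to prove that bound. Your reroutings produce obstructions of the form $u_{j-1}\to u_{j+1}$, short chords that have nothing to do with $R_{p-1}$; you then hope an unspecified ``balancing through the switch point $k_0$'' will recover a pairing between $R_0$ and $R_{p-1}$, and you yourself flag this as the hard, uncertain step. In fact no such bookkeeping is needed. Using only that the hybrid vertex $w$ satisfies $u_{p-2}\to w\to u_1$ (which follows from its switch index lying in $\{1,\dots,p-3\}$, independently of the actual value of $k_0$), observe that for every $i\in\{1,\dots,p-3\}$ the sequence
\[
u_0\,u_{i+1}\,u_{i+2}\,\cdots\,u_{p-2}\,w\,u_1\,u_2\,\cdots\,u_i\,u_{p-1}
\]
is an extension of $P$ whenever both $u_0\to u_{i+1}$ and $u_i\to u_{p-1}$ hold. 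Hence at most one of these two arcs can be present for each $i$. This directly pairs the potential forward arcs $u_0\to u_j$ ($2\le j\le p-2$) with the potential arcs $u_{j-1}\to u_{p-1}$, giving $d_P^+(u_0)+d_P^-(u_{p-1})\le (p-3)+1+1+2=p+1$, i.e.\ $R_0-R_{p-1}\le 1$, in one line---no case split on $k_0$, no short chords, no boundary analysis.

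A minor point: your worry about $w_0\in\{u_0,u_{p-1}\}$ does not arise. In the paper's convention, spelled out just before the lemma, hybrid vertices of any subpath of $P$ are by definition taken from $V(T)\setminus V(P)$, so the hypothesis already guarantees $w\notin V(P)$.
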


\begin{proof}
Suppose $P^\prime$ has a hybrid vertex.
Then there is at least one hybrid vertex $w$ switching at $1\leq k_0 \leq p-3$,
that is $u_{k_0+1}\rightarrow w \rightarrow u_{k_0}$.
We have $u_j \rightarrow w \rightarrow u_i$, for all $i \in [1, k_0]$ and $j \in [k_0+1, p-2]$. In particular,
$u_{p-2} \rightarrow w \rightarrow u_1$. For any $1\leq i \leq p-3$, if $u_0u_{i+1} \in A(T)$ and
$u_i u_{p-1} \in A(T)$, then $P$ can be extended to
$Q=u_0u_{i+1}\ldots u_{p-2}wu_1 \ldots u_i u_{p-1}$,
contradicting that $P$ is not extendable. Hence $|\{u_0u_{i+1}, u_iu_{p-1}\} \cap A(T) |\leq 1$, and
\begin{equation} \nonumber \label{DegreeonP}
d_P^+(u_0)+d_P^-(u_{p-1}) \leq p-3+1+1+2 = p+1.
\end{equation}

Let $F=T-V(P)$. Then,
\begin{equation} \label{eqn:DegreetoF}
\begin{split}
d^+_F(u_0) + d^-_F(u_{p-1})
&=d^+(u_0)+d^-(u_{p-1})-d_p^+(u_0)-d_p^-(u_{p-1}) \\
&\geq \frac{n-1-i(T)}{2}+\frac{n-1-i(T)}{2}-(p+1) \\
&=n-p-i(T)-2.
\end{split}
\end{equation}

For any $w\in H(P)$, we have $u_{p-1}\rightarrow w \rightarrow u_0$,
and hence $H(P)\subseteq N_F^-(u_0)\cap N_F^+(u_{p-1})$. Since $T$ is a tournament,
\begin{equation}\label{eqn:Completeness}N_F^-(u_0)\cap N_F^+(u_{p-1}) = V(F)\setminus (N_F^+(u_0) \cup N_F^-(u_{p-1})).\end{equation}

Furthermore, since $P$ is not extendable,
\begin{equation}\label{eqn:Nonintersect} N_F^+(u_0) \cap N_F^-(u_{p-1}) = \emptyset.\end{equation}

By (\ref{eqn:DegreetoF}), (\ref{eqn:Completeness}) and (\ref{eqn:Nonintersect}),
\begin{equation} \nonumber
\begin{split}
  h(P)
 &=|H(P)| \\
 &= |N_F^-(u_0)\cap N_F^+(u_{p-1})|  \\
 &= |V(F)\setminus (N_F^+(u_0) \cup N_F^-(u_{p-1}))| \\
 &= |F|- |N_F^+(u_0)| - |N_F^-(u_{p-1}))|\\
 &= |F|- d_F^+(u_0) - d_F^-(u_{p-1})  \\
 &\leq |F|-(n-p-i(T)-2)  \\
 &= i(T)+2,
\end{split}
\end{equation}
$\noindent$which completes the proof of the lemma.
\end{proof}

In the proof of Theorem~\ref{thm:pi_2(T)i(T)}, we will need the following theorem which provides a lower bound on the length of $P$.
It has been proved in \cite{Zhang2017}. For the sake of completeness, we include the proof here.
\begin{theorem} (Zhang, Theorem 5.18 in \cite{Zhang2017}) \label{thm:LowerBoundP}
Let $T$ be a tournament with $\pi_2(T)\ge 1$. If $P$ is a non-extendable path in $T$ on $p$ vertices,
then $p \ge 3\pi_2(T)+3$.
\end{theorem}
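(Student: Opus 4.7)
I would proceed by contradiction, assuming $P=u_0u_1\cdots u_{p-1}$ is non-extendable with $p\le 3\pi_2(T)+2$.

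The first step is a key structural observation: \emph{for every pair of indices $a<b$, every $(u_a,u_b)$-$2$-path has its middle vertex in $V(P)$}. Indeed, by the same classification of external vertices already set up before Lemma~\ref{lem:h(P)}, a potential external middle $w\in V(T)\setminus V(P)$ is ruled out in each case: a dominating $w$ satisfies $w\to u_a$ and so $u_a\to w$ fails; a dominated $w$ satisfies $u_b\to w$ and so $w\to u_b$ fails; a hybrid $w$ switching at $k$ would require $a\ge k+1$ and simultaneously $b\le k$, which is impossible when $a<b$.

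The second step applies this to the three pairs $(u_0,u_1)$, $(u_0,u_{p-1})$, and $(u_{p-2},u_{p-1})$. Each contributes at least $\pi_2(T)$ 2-paths with middles in $V(P)$; denote the corresponding middle sets by $M_1,M_2,M_3$. Because $u_1u_2\in A(T)$ and $u_{p-3}u_{p-2}\in A(T)$ force the arcs $u_2\to u_1$ and $u_{p-2}\to u_{p-3}$ to fail, one gets $u_2\notin M_1$ and $u_{p-3}\notin M_2$. Combining with the endpoint exclusions, we have
\[
M_1\subseteq\{u_3,\ldots,u_{p-1}\},\qquad M_2\subseteq\{u_0,\ldots,u_{p-4}\},\qquad M_3\subseteq\{u_1,\ldots,u_{p-2}\},
\]
each of size at least $\pi_2(T)$.

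The crux of the proof — and the step I expect to be the main obstacle — is to show that $M_1,M_2,M_3$ are pairwise disjoint and that their union avoids at least three further vertices of $V(P)$, yielding $3\pi_2(T)\le |M_1\cup M_2\cup M_3|\le p-3$ and the desired contradiction. Pairwise disjointness would be established case by case: for instance, an overlap $u_j\in M_1\cap M_3$ gives the arcs $u_0\to u_j$, $u_j\to u_1$, $u_j\to u_{p-1}$; using these to rearrange $P$ into a Hamiltonian path of $\langle V(P)\rangle$ of the form $u_0 u_j u_1 u_2\cdots u_{j-1}u_{j+1}\cdots u_{p-1}$ (valid if $u_{j-1}u_{j+1}\in A(T)$, with an iterated rearrangement otherwise) and exploiting the existence of at least one external vertex (guaranteed since $P$ is nonhamiltonian, so $p<n$) one locates a consecutive pair into which an external vertex can be inserted, contradicting non-extendability of $P$. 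Analogous arguments handle $M_1\cap M_2$ and $M_2\cap M_3$, and a parallel rearrangement-and-insertion scheme excludes three extra vertices (for instance $u_0$, $u_{p-1}$, and one further boundary-adjacent index) from the union: the presence in the union of such an exceptional vertex would, via the induced arcs like $u_{p-2}\to u_0$ or $u_{p-1}\to u_1$, again permit an external-vertex insertion incompatible with non-extendability. This rearrangement-and-insertion combinatorics is where the argument becomes delicate, but once in place it closes the contradiction $p\le 3\pi_2(T)+2$ and yields $p\ge 3\pi_2(T)+3$.
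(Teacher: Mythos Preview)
Your first step---that every $(u_a,u_b)$-$2$-path with $a<b$ has its middle vertex on $P$---is correct and is exactly the key observation the paper uses. The trouble is in the second step. The rearrangement-and-insertion scheme you sketch cannot work in principle: path extendability depends only on the endpoints and the vertex set, not on the particular path. If $P'$ is any other $(u_0,u_{p-1})$-path with $V(P')=V(P)$, then an extension of $P'$ to $V(P)\cup\{w\}$ is simultaneously an extension of $P$; hence $P'$ is just as non-extendable as $P$, and rearranging buys you nothing. So your proposed mechanism for proving $M_1,M_2,M_3$ pairwise disjoint (and for excluding three extra vertices) is not available, and without it the counting $3\pi_2(T)\le p-3$ does not go through. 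There is also a minor labeling slip: the inclusion $M_2\subseteq\{u_0,\dots,u_{p-4}\}$ you write matches the pair $(u_{p-2},u_{p-1})$, not $(u_0,u_{p-1})$.

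The paper's argument avoids this difficulty entirely by focusing on a single vertex, $u_1$. From $V_I(u_0,u_1)\subseteq V(P)$ one gets $|N^-_P(u_1)|\ge \pi_2(T)+1$. The clever part is the bound on $|N^+_P(u_1)|$: every $u_i\in N^+_P(u_1)$ has $i\ge 2$, so $V_I(u_1,u_i)\subseteq V(P)$, and in fact $V_I(u_1,u_i)\subseteq N^+_P(u_1)$. Thus every vertex of the subtournament $\langle N^+_P(u_1)\rangle$ has in-degree at least $\pi_2(T)$, forcing $|N^+_P(u_1)|\ge 2\pi_2(T)+1$. Adding $u_1$ itself gives $p\ge 1+(\pi_2(T)+1)+(2\pi_2(T)+1)=3\pi_2(T)+3$. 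This neighbourhood-in-degree trick replaces your three-set disjointness problem with a single clean counting step.
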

\begin{proof}
Let $P=u_0u_1\ldots u_{p-1}$. Furthermore, let $F=T-V(P)$. For $u,v\in V(T)$, $u\neq v$,
we denote the set of the intermediate vertices of all $(u,v)$-$2$-paths in $T$ by $V_I(u,v)$.
By the condition of the theorem, $|V_I(u,v)|\geq \pi_2(T) \geq 1$.

Since $P$ is not extendable, any $v\in V(F)$ can only be a dominating, dominated or hybrid vertex of $P$.
Therefore, for any integer $i$ and $j$ such that $0\leq i < j \leq p-1$,
there cannot exist a vertex $v\in V(F)\cap V_I(u_i,u_j)$. Hence, $V_I(u_i,u_j)\subseteq V(P)$.

Since $V_I(u,v)\geq 1$ for all vertex pairs $\{u,v\}$, all arcs are extendable.
Therefore, we may assume $|V(P)|\geq 3$.
Since $|V_I(u_0,u_1)|\geq \pi_2(T)$ and $V_I(u_0, u_1)\subseteq V(P)$,
$u_1$ has at least $\pi_2(T)$ in-neighbors on $P$ besides $u_0$.
So, we have $$|N^-_P(u_1)|\geq \pi_2(T)+1.$$

Consider the set of out-neighbors $N^+_P(u_1)$ of $u_1$ on $P$.
For any $u_i\in N^+_P(u_1)$, we have $i > 1$.
Hence, we have $V_I(u_1, u_i)\subseteq V(P)$, and furthermore, $V_I(u_1, u_i)\subseteq N^+_P(u_1)$.
Since $|V_I(u_1,u_i)|\geq \pi_2(T)$, $u_i$ has at least $\pi_2(T)$ in-neighbors in $N^+_P(u_1)$.
This means that in $\langle N^+_P(u_1) \rangle$ the minimum in-degree is at least $\pi_2(T)$.
Therefore, $|N^+_P(u_1)|(|N^+_P(u_1)|-1)/2 \geq \pi_2(T) |N^+_P(u_1)|$,
from which we get $$|N^+_P(u_1)|\geq 2\pi_2(T)+1.$$

Since $T$ is a tournament, $N^-_P(u_1) \cap N^+_P(u_1) = \emptyset$. We conclude that $p\geq 1+(\pi_2(T)+1)+(2\pi_2(T)+1)=3\pi_2(T)+3$.
\end{proof}

Now we have all the ingredients to prove Theorem~\ref{thm:pi_2(T)i(T)}.

\begin{proof}[\textit{Proof of Theorem~\ref{thm:pi_2(T)i(T)}}]

Let $P=u_0u_1\ldots u_{p-1}$ be a non-extendable path in $T$.
By Theorem~\ref{thm:LowerBoundP}, we have $p \ge 3\pi_2(T)+3 \ge 6$.
Thus we may further let $P^\prime = u_1\ldots u_{p-2}$,
where $|V(P^\prime)|\ge 4$.
We discuss the following two cases.
In Case 1, $P^\prime$ does not have a hybrid vertex.
In Case 2, $P^\prime$ has at least one hybrid vertex,
and thus by Lemma~\ref{lem:h(P)}, $h(P)\leq i(T)+2$.

\medskip
\noindent
\textbf{Case 1. $\boldsymbol{P^\prime}$ does not have a hybrid vertex.}

\noindent
The vertices of $V(T)\setminus V(P)$ can be divided into four sets, i.e.,
those dominating $V(P)$,
those dominating $V(P^\prime)\cup \{u_0\}$ but dominated by $u_{p-1}$,
those dominated by $V(P^\prime)\cup \{u_{p-1}\}$ but dominating $u_{0}$,
and those dominated by $V(P)$, denoted by $N_0^-$, $N_1^-$, $N_1^+$ and
$N_0^+$, respectively. The orders of $N_0^-$, $N_1^-$, $N_1^+$ and
$N_0^+$ are denoted by $n_0^-$, $n_1^-$, $n_1^+$ and $n_0^+$, respectively.
Calculating the sum of the out-degrees of all vertices in $N_0^+\cup N_1^+$,
we have

$$(n_0^++n_1^+)\cdot \frac{n-i(T)-1}{2} \leq \sum_{v\in N_0^+\cup N_1^+}d^+(v)=
n_1^++\frac{(n_0^++n_1^+)(n_0^++n_1^+-1)}{2}+d^+(N_1^+\cup N_0^+, N_0^-\cup N_1^-).$$

Therefore,
$$\frac{n_0^++n_1^+}{2}(n-i(T)-n_0^+-n_1^+)-n_1^+
\leq d^+(N_1^+\cup N_0^+, N_0^-\cup N_1^-)
\leq (n_0^-+n_1^-)(n_0^++n_1^+).$$

\textcolor{black}{If $n_0^++n_1^+=0$, we must have $n_0^-=0$, for there can be no $2$-path from any
$x\in V(P)\setminus \{u_{p-1}\}$ to a $y\in N_0^-$.
Then $n_1^- \ne 0$.
Moreover, there can be only one $2$-path $xu_{p-1}y$ from
any $x\in V(P)\setminus \{u_{p-1}\}$ to a $y\in N_1^-$, thus $\pi_2(T)=1$.
By the condition of the theorem, $2\pi_2(T)-(n+8)/6 > i(T) \ge 0$.
Thus we get $n<4$, contradicting $n\ge 9$.}

Thus we may assume $n_0^++n_1^+\neq 0$ and get
$$n-n_0^+-n_1^+-i(T)-\frac{2n_1^+}{n_0^++n_1^+}\leq 2n_0^-+2n_1^-.$$

Note that $n-n_0^+-n_1^+=p+n_0^-+n_1^-$, so we have
\begin{equation} \label{eqn:CalOutdegree}
p\leq n_0^-+n_1^-+i(T)+\frac{2n_1^+}{n_0^++n_1^+}.
\end{equation}

Similarly, if we consider the sum of the in-degrees of all vertices in $N_0^-\cup N_1^-$,
we obtain
\begin{equation} \label{eqn:CalIndegree}
p\leq n_0^++n_1^++i(T)+\frac{2n_1^-}{n_0^-+n_1^-}.
\end{equation}

Summing up (\ref{eqn:CalOutdegree}) and (\ref{eqn:CalIndegree}), we get
$$2p\leq n-p+2i(T)+\frac{2n_1^+}{n_0^++n_1^+}+\frac{2n_1^-}{n_0^-+n_1^-}.$$

Therefore,
$$3p\leq n+2i(T)+\frac{2n_1^+}{n_0^++n_1^+}+\frac{2n_1^-}{n_0^-+n_1^-}\leq n+2i(T)+4.$$

So,
\begin{equation} \label{eqn:UpperBoundForp}
p\leq \frac{n+2i(T)+4}{3}.
\end{equation}

Next, we consider $2$-paths between vertex pairs in $P^\prime$. By Lemma~\ref{lem:Surplus},

$$\sum_{u,v\in V(P^\prime)}p_2(u,v)= (p-2)(p-3)\pi_2(T)+s(V(P^\prime)) \ge
(p-2)(p-3)\pi_2(T)+{p-2 \choose 2}- \Big\lceil\frac{p-2}{2}\Big\rceil\Big\lfloor\frac{p-2}{2}\Big\rfloor.$$

Since $P^\prime$ does not have a hybrid vertex,
the intermediate vertex of a $2$-path between a vertex pair in $P^\prime$ must be in $V(P)$.
The number of $2$-paths with all vertices in $V(P^\prime)$ is
at most $(p-2)\lfloor (p-3)/2 \rfloor \lceil (p-3)/2 \rceil$.
The number of $2$-paths between a vertex pair in $P^\prime$
that go through $u_0$ or $u_{p-1}$ is at most
$\lfloor (p-2)/2 \rfloor \lceil (p-2)/2 \rceil$.
So,

$$\sum_{u,v\in V(P^\prime)}p_2(u,v) \le (p-2)\Big\lfloor \frac{p-3}{2} \Big\rfloor \Big\lceil \frac{p-3}{2} \Big\rceil
+\Big\lfloor \frac{p-2}{2} \Big\rfloor \Big\lceil \frac{p-2}{2} \Big\rceil.$$

Thus,
\begin{equation} \nonumber
(p-2)(p-3)\pi_2(T)+{p-2 \choose 2}- \Big\lceil\frac{p-2}{2}\Big\rceil\Big\lfloor\frac{p-2}{2}\Big\rfloor
\le (p-2)\Big\lfloor \frac{p-3}{2} \Big\rfloor \Big\lceil \frac{p-3}{2} \Big\rceil
+\Big\lfloor \frac{p-2}{2} \Big\rfloor \Big\lceil \frac{p-2}{2} \Big\rceil,
\end{equation}

from which we obtain
\begin{equation} \label{eqn:pAndPi_2}
\pi_2(T) \le \frac{1}{p-3}\Big\lfloor \frac{p-3}{2} \Big\rfloor \Big\lceil \frac{p-3}{2} \Big\rceil
+\frac{2}{(p-2)(p-3)}\Big\lfloor \frac{p}{2}-1 \Big\rfloor \Big\lceil \frac{p}{2}-1 \Big\rceil - \frac{1}{2}.
\end{equation}

First assume that $p$ is odd. Using (\ref{eqn:pAndPi_2}), we get

$$p\ge 4\pi_2(T)+3-\frac{2}{p-2}.$$

Since $p\ge 6$, clearly $2/(p-2) < 1$. Since $p$ is an odd integer,
$$p\ge 4\pi_2(T)+3.$$

Next assume $p$ is even. Using (\ref{eqn:pAndPi_2}), we get

$$p\ge 4\pi_2(T)+3-\frac{1}{p-3}.$$

Since $p$ is an even integer, similarly to the above
$$p\ge 4\pi_2(T)+4 > 4\pi_2(T)+3.$$

Combining the above with (\ref{eqn:UpperBoundForp}), we have
$$4\pi_2(T)+3\le \frac{n+2i(T)+4}{3},$$

which simplifies to
$$i(T)\ge 6\pi_2(T)-\frac{n-5}{2}.$$

$\newline$

\medskip
\noindent
\textbf{Case 2. $\boldsymbol{P^\prime}$ has a hybrid vertex.}

\noindent
By Lemma~\ref{lem:h(P)}, we have $h(P)\leq i(T)+2$.
Consider the sum of the number of $2$-paths between vertex pairs of $V(P)$. By Lemma~\ref{lem:Surplus}, we have

\begin{equation} \label{eqn:LowerBoundHybrid}
\sum_{u,v\in V(P)}p_2(u,v)= p(p-1)\pi_2(T)+s(V(P))
\ge  p(p-1)\pi_2(T)+{p \choose 2}-\Big\lceil \frac{p}{2} \Big\rceil \Big\lfloor \frac{p}{2} \Big\rfloor.
\end{equation}

All $2$-paths between a vertex pair of $V(P)$ must have their intermediate vertex in $V(P)\cup H(P)$.
The number of those with an intermediate vertex in $V(P)$ is at most
$p \lfloor (p-1)/2 \rfloor \lceil (p-1)/2 \rceil$,
and of those with an intermediate vertex in $H(P)$ is at most
$ \lfloor p/2 \rfloor \lceil p/2 \rceil h(p)$. Thus,

\begin{equation} \label{eqn:UpperBoundHybrid}
\sum_{u,v\in V(P)}p_2(u,v)\leq
p \Big\lfloor \frac{p-1}{2} \Big\rfloor \Big\lceil \frac{p-1}{2} \Big\rceil +
 \Big\lfloor \frac{p}{2} \Big\rfloor \Big\lceil \frac{p}{2} \Big\rceil h(p).
\end{equation}

By (\ref{eqn:LowerBoundHybrid}) and (\ref{eqn:UpperBoundHybrid}),
\begin{equation} \nonumber
p(p-1)\pi_2(T)+{p \choose 2}-\Big\lceil \frac{p}{2} \Big\rceil \Big\lfloor \frac{p}{2} \Big\rfloor \le
p \Big\lfloor \frac{p-1}{2} \Big\rfloor \Big\lceil \frac{p-1}{2} \Big\rceil + \Big\lfloor \frac{p}{2} \Big\rfloor \Big\lceil \frac{p}{2} \Big\rceil h(p),
\end{equation}
that is,
\begin{equation} \label{eqn:Pi_2Hybrid}
\pi_2(T)\le \frac{1}{p-1}\Big\lfloor \frac{p-1}{2} \Big\rfloor \Big\lceil \frac{p-1}{2} \Big\rceil
+\frac{h(p)+1}{p(p-1)}\Big\lfloor \frac{p}{2} \Big\rfloor \Big\lceil \frac{p}{2} \Big\rceil - \frac{1}{2}.
\end{equation}

From (\ref{eqn:Pi_2Hybrid}) we get

\begin{equation} \label{eqn:PAndPi_2Hybrid} \nonumber
p \ge \left\{
\begin{array}
{l l}
4\pi_2(T)-h(P)+2-(h(P)+1)/p, & \mbox{if\ } p \mbox{ is odd,} \\
4\pi_2(T)-h(P)+2-h(P)/(p-1), & \mbox{if\ } p \mbox{ is even.}
\end{array}
\right.
\end{equation}

By Theorem~\ref{thm:LowerBoundP},
$p\ge 3\pi_2(T)+3 > 3(n-9)/12+3 = (n+3)/4$,
and hence $h(P)\le n-p < (3n-3)/4$.
Therefore $(h(P)+1)/p < ((3n-3)/4+1) / ((n+3)/4) = (3n+1)/(n+3)< 3$ and
$h(P)/(p-1)<((3n-3)/4)/ ((n+3)/4-1)= (3n-3)/(n-1)=3$.
Since $p$, $\pi_2(T)$ and $h(P)$ are integers, we have
\begin{equation} \label{eqn:LowerBoundforpWithh(P)}
 p \geq 4\pi_2(T)-h(P)+2-2=4\pi_2(T)-h(P).
\end{equation}

Considering the out-degrees of the vertices of $N_0^+$, we have

$$n_0^+\cdot \frac{n-1-i(T)}{2} \leq \sum_{u\in N_0^+}d^+(u)
\leq \frac{n_0^+(n_0^+-1)}{2}+n_0^+(h(P)+n_0^-), $$
that is,
$$n-n_0^+-n_0^--h(P) \leq n_0^-+i(T)+h(P).$$
Note that $p=n-n_0^+-n_0^--h(P)$, thus we have
\begin{equation} \label{eqn:UpperBoundforPwithn0-}
p\leq n_0^-+i(T)+h(P).
\end{equation}

Similarly, we have
\begin{equation} \label{eqn:UpperBoundforPwithn0+}
p\leq n_0^++i(T)+h(P).
\end{equation}

Summing up (\ref{eqn:UpperBoundforPwithn0-}) and (\ref{eqn:UpperBoundforPwithn0+}), we have
\begin{equation}\nonumber
\begin{split}
2p &\leq n_0^-+n_0^++2i(T)+2h(P) \\
    &= n-p-h(P)+2i(T)+2h(P) \\
    &= n-p+h(P)+2i(T).
\end{split}
\end{equation}

Therefore,
\begin{equation} \label{eqn:UpperBoundforpSum}
3p\leq n+h(P)+2i(T).
\end{equation}

Combining (\ref{eqn:LowerBoundforpWithh(P)}) and (\ref{eqn:UpperBoundforpSum}), we obtain
$$12\pi_2(T)-3h(P) \leq 3p \leq n+h(P)+2i(T).$$
Using that $h(P)\leq i(T)+2$, we get
\begin{equation}  \nonumber
\begin{split}
12\pi_2(T) &\le n+4h(P)+2i(T) \\
 &\le n+4(i(T)+2) + 2i(T) \\
 &= n+6i(T)+8,
\end{split}
\end{equation}
that is,
\begin{equation}\label{i(T)andpi_2(T)} \nonumber
i(T)\ge  2\pi_2(T)-\frac{n+8}{6}.
\end{equation}

Using the condition of the theorem that $\pi_2(T)>(n-9)/12$, we get
$$ 6\pi_2(T)-\frac{n-5}{2}-\Big(2\pi_2(T)-\frac{n+8}{6}\Big)
= 4\pi_2(T)-\frac{4n-46}{12} > \frac{4n-36}{12} -\frac{4n-46}{12}=\frac{5}{6}>0.$$

Hence in either case, $i(T)\ge  2\pi_2(T)-(n+8)/6$, contradicting the condition of the theorem.
\end{proof}

%

\subsection{Proof of Lemma~\ref{lem:i(T)AndPi(T)}}
By (\ref{eqn:i(T)andDegree}), either $\delta^+(T)=(n-1-i(T))/2)$ or $\delta^-(T)=(n-1-i(T))/2)$.
Assume the first equation holds and let $u\in V(T)$ be a vertex in $T$ such that $d^+(u)=\delta^+(T)=(n-1-i(T))/2)$.

Consider the subdigraph $T_u=\langle N^+(u)\rangle$ and any vertex $v$ in it. Every
$\{u,v\}$-$2$-path in $T$ must have its intermediate vertex in $T_u$. Therefore,
for the minimum in-degree $\delta^-_{T_u}$ of $T_u$, we have
$\delta^-_{T_u} \geq \pi_2(T)$.
Thus,
using $|T_u|$ as shorthand for $|V(T_u)|$,
 $$\pi_2(T)|T_u|\leq \sum_{v\in T_u}d_{T_u}^-(v) = |T_u|(|T_u|-1)/2,$$
that is $|T_u|\geq 2\pi_2(T)+1$, that is, $\delta^+(T)\geq 2\pi_2(T)+1$.
Therefore, $2\pi_2(T)+1\leq (n-1-i(T))/2$, we have that
$i(T)\leq n-4\pi_2(T)-3$.

For the case that $\delta^-(T)=(n-1-i(T))/2)$, we can have a similar proof
by considering a vertex $u$ with $d^-(u)=\delta^-(T)=(n-1-i(T))/2)$ and the sum of
the out-degree of the vertex in the subgraph $T_u=\langle N^-(u)\rangle$.

\subsection{Proof of Theorem~\ref{thm:pi_2(T)only}}

Let $T$ be a tournament satisfying the condition of the theorem.
Note that $\pi_2(T) > (7n-10)/36>(n-9)/12$.
Furthermore, by Lemma \ref{lem:i(T)AndPi(T)}, $i(T)\le n-4\pi_2(T)-3$, hence
\begin{equation} \nonumber
\begin{split}
 2\pi_2(T)-\frac{(n+8)}{6} - i(T)
 &\ge  2\pi_2(T)-\frac{(n+8)}{6} - (n-4\pi_2(T)-3) \\
 &= 6\pi_2(T)-\frac{7n-10}{6} \\
 &= 6\Big(\pi_2(T)-\frac{7n-10}{36}\Big) \\
 &> 0.
\end{split}
\end{equation}
Therefore $2\pi_2(T)-(n+8)/6 > i(T)$, and by Theorem \ref{thm:pi_2(T)i(T)},
$T$ is path extendable.

\section{
Random Tournament; Proof of Theorem~\ref{thm:RandomTPi_2} and Theorem~\ref{thm:RandomTPathExt}}
\label{sec:RandomTPi_2}

\subsection{Proof of Theorem~\ref{thm:RandomTPi_2}}
We will need the following Chernoff bound in our proof.

\begin{lemma} \label{lem:Chernoff}
(Multiplicative Chernoff bound, \cite{MitUpf2017}). Suppose $X_1$, $\ldots$, $X_n$ are independent random variables taking values in $\{0, 1\}$,
where $Pr\{X_i=1\}=p_i$.
Let $X$ denote their sum and let $\mu = E[X]$ denote the expectation of $X$. Then for any $ 0<\delta <1$,
$$Pr\{X<(1-\delta)\mu\}<\bigg( \frac{e^{-\delta}}{(1-\delta)^{1-\delta}}\bigg)^\mu.$$
\end{lemma}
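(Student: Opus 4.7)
The plan is to invoke the standard exponential-moment (Chernoff--Cramer) argument for lower-tail deviations, and then optimize the free parameter to match the precise form in the statement. The main ingredients will be Markov's inequality applied to $e^{-tX}$, independence of the $X_i$ to factorize the moment generating function, the elementary bound $1+x\le e^x$ applied factorwise, and a single-variable optimization over the parameter $t$. There is no real obstacle here beyond bookkeeping, since this is a classical argument; the only delicate point is recovering the \emph{strict} inequality in the conclusion.

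First I would fix an arbitrary $t>0$ and observe that
$$\Pr\{X<(1-\delta)\mu\}=\Pr\{e^{-tX}>e^{-t(1-\delta)\mu}\},$$
because $x\mapsto e^{-tx}$ is strictly decreasing. Applying Markov's inequality to the non-negative variable $e^{-tX}$ gives
$$\Pr\{X<(1-\delta)\mu\}\le e^{t(1-\delta)\mu}\,E[e^{-tX}].$$
By independence, $E[e^{-tX}]=\prod_{i=1}^n E[e^{-tX_i}]$, and since each $X_i$ is $\{0,1\}$-valued with mean $p_i$, a direct computation yields $E[e^{-tX_i}]=1+p_i(e^{-t}-1)$. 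The elementary inequality $1+x\le e^x$ upgrades this to $E[e^{-tX_i}]\le \exp(p_i(e^{-t}-1))$, and multiplying over $i$ and using $\sum_i p_i=\mu$ gives $E[e^{-tX}]\le \exp(\mu(e^{-t}-1))$. Combining these steps produces the one-parameter family of bounds
$$\Pr\{X<(1-\delta)\mu\}\le \exp\bigl(\mu(e^{-t}-1)+t(1-\delta)\mu\bigr).$$

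The last step is to minimize the right-hand side over $t>0$. Differentiating the exponent with respect to $t$ gives $-\mu e^{-t}+(1-\delta)\mu$, which vanishes exactly at $t^{\star}=-\ln(1-\delta)$; since $0<\delta<1$, this $t^{\star}$ is strictly positive, so it lies in the admissible range. Substituting $e^{-t^{\star}}=1-\delta$ reduces the exponent to $-\delta\mu-(1-\delta)\ln(1-\delta)\cdot\mu$, which exponentiates to the claimed expression $\bigl(e^{-\delta}/(1-\delta)^{1-\delta}\bigr)^{\mu}$. For the strict inequality in the conclusion, I would appeal to the sharp form $1+x<e^x$ for every $x\ne 0$, which tightens the bound on $E[e^{-tX_i}]$ whenever some $p_i\in(0,1)$ (the degenerate cases $\mu=0$ or all $p_i\in\{0,1\}$ either make the tail event empty or can be handled by inspection, so the bound is non-trivial precisely when the strict version applies).
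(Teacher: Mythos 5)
Your proof is correct: the exponential-moment argument with Markov's inequality applied to $e^{-tX}$, the factorwise bound $1+x\le e^{x}$, and the optimization at $t^{\star}=-\ln(1-\delta)$ all check out, and your handling of strictness via $1+x<e^{x}$ for $x\ne 0$ (with the all-$p_i=0$ case inspected separately) is sound. The paper itself does not prove this lemma but simply cites it from Mitzenmacher and Upfal, and your argument is precisely the standard derivation given there, so there is nothing to compare beyond noting agreement.
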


\begin{proof}[\textit{Proof of Theorem \ref{thm:RandomTPi_2}}]
Let $u$, $v$ be two vertices in $D \in \overrightarrow{G}(n,p)$. Let the other $n-2$ vertices be denoted by $w_i$,
and define the indicator variable $X_i$ as
\begin{equation} \nonumber
X_i= \left \{
\begin{array}
{l l}
1 & \mbox{if } u \rightarrow w_i \rightarrow v, \\
0 & \mbox{otherwise,}
\end{array}
\right.
\end{equation}
for $ 0 \le i \le n-3$. We clearly have $E[X_i]=Pr\{X_i=1\}=p^2$.

Further define $X=\sum_{i=0}^{n-3}X_i$. Then the variable corresponding to $p_2(u,v)$ is $X$, and $\mu=E[X]=(n-2)p^2$.
Applying the Chernoff bound of Lemma~\ref{lem:Chernoff} to $X$, for $0< \varepsilon < p^2$ as defined in the condition of the theorem, we have
\begin{equation} \nonumber
Pr\{X<(1-\varepsilon)(n-2)p^2\}<\bigg( \frac{e^{-\varepsilon}}{(1-\varepsilon)^{1-\varepsilon}}\bigg)^{(n-2)p^2}.
\end{equation}

Let $N=2(1-\varepsilon)p^2/(\varepsilon(1-p^2))$. For any integer $n>N$, we have
\begin{equation} \nonumber
\begin{split}
(1-\varepsilon)(n-2)p^2-(p^2-\varepsilon)n
&= (1-p^2)\varepsilon n-2 (1-\varepsilon) p^2 \\
&> (1-p^2)\varepsilon N-2 (1-\varepsilon) p^2  \\
&= 0.
\end{split}
\end{equation}
Therefore, $(1-\varepsilon)(n-2)p^2 > (p^2-\varepsilon)n >0$, and
$$Pr\{p_2(u,v)<(p^2-\varepsilon)n\} =
Pr\{X<(p^2-\varepsilon)n\} < Pr\{X<(1-\varepsilon)(n-2)p^2\}
<\bigg( \frac{e^{-\varepsilon}}{(1-\varepsilon)^{1-\varepsilon}}\bigg)^{(n-2)p^2}.$$

Consider the event $\pi_2(D) < (p^2-\varepsilon)n$,
which happens if and only if there is at least a vertex pair $\{u, v\}$ in $D$
such that $p_2(u,v) < (p^2-\varepsilon)n$.
This event is the union of the events $p_2(u,v) < (p^2-\varepsilon)n$,
where $\{u, v\}$ runs over all vertex pairs in $D$.
Applying Boole's inequality that the probability of the union of events
is at most the sum of the probabilities of every event, we get
\begin{equation} \nonumber
\begin{split}
Pr\{\pi_2(D)<(p^2-\varepsilon)n \}
&= Pr\Big\{\bigcup_{\substack{u,v\in V(D) \\ u\ne v}}p_2(u,v)<(p^2-\varepsilon)n \Big\} \\
&\le \sum_{\substack{u,v\in V(D) \\ u\ne v}} Pr\{p_2(u,v)<(p^2-\varepsilon)n \}\\
&< n(n-1) \bigg( \frac{e^{-\varepsilon}}{(1-\varepsilon)^{1-\varepsilon}}\bigg)^{(n-2)p^2}.
\end{split}
\end{equation}

Let $g(x)=e^{-x}/(1-x)^{1-x}$, $0\le x \le 1/4$. Then $g'(x)=e^{-x}(1-x)^{x-1}\log (1-x)$.
Since $g'(x)<0$ for $0<x\le 1/4$, $g(x)$ is strictly decreasing in $(0,1/4)$, and $g(x)<g(0)=1$.
Therefore, $ e^{-\varepsilon}/(1-\varepsilon)^{1-\varepsilon} <1$, and for a fixed $p>0$,
$$ \lim_{n\rightarrow \infty}  n(n-1)\bigg( \frac{e^{-\varepsilon}}{(1-\varepsilon)^{1-\varepsilon}}\bigg)^{(n-2)p^2}=0,$$
yielding
$$\lim_{n\rightarrow \infty} Pr\{\pi_2(D)\ge (p^2-\varepsilon)n \}
=1-\lim_{n\rightarrow \infty} Pr\{\pi_2(D)< (p^2-\varepsilon)n \}
=1.$$
We conclude that almost all oriented graphs $D\in \overrightarrow{G}(n,p)$ satisfy $\pi_2(D)\ge (p^2-\varepsilon)n$.
\end{proof}


\subsection{Proof of Theorem \ref{thm:RandomTPathExt}}

Letting $p=1/2$ in $\overrightarrow{G}(n,p)$, we
obtain
a model for random tournaments.
By Theorem~\ref{thm:RandomTPi_2}, for $0 < \varepsilon <1/4$, almost all tournaments $T$ satisfy
$\pi_2(T)\ge (1/4-\varepsilon)n$. 
Letting $\varepsilon < 1/18$, we 
get
$(1/4- \varepsilon)n > 7n/36 > (7n-10)/36$. 
Hence, by Theorem~\ref{thm:pi_2(T)only},
almost all tournaments are path extendable.

\section{Conclusion and open problems} \label{sec:Final}

Earlier results on tournaments have revealed that in tournaments strongness implies hamiltonicity, pancyclicity
and cycle extendability (with an exceptional class), but not path extendability.
However, by considering analogous structural results on hamiltonian properties of undirected graphs, we were tempted to expect that imposing similar conditions as in
Theorem~\ref{thm:irregularitystronglypanconnected}
and Theorem~\ref{thm:2pathallpaths} on a tournament would imply that it is path extendable.
Our results for path extendability show that the situation is in fact quite different.
Although path extendability seems to be
asking only a little bit more than strong panconnectedness
and completely strong path-connectedness,
to guarantee path extendability we need to impose significantly 
stronger conditions,
in particular 
conditions that
combine restrictions on $i(T)$ and $\pi_2(T)$.
%
Based on our work, we leave the following two problems 
for future research.


Lemma \ref{lem:i(T)AndPi(T)} shows that
a larger $\pi_2(T)$ implies a smaller $i(T)$. In particular,
if $\pi_2(T)$ attains its supremum $(n-3)/4$, we get $i(T)=0$, and $T$ is actually doubly regular.
In this sense, the result is tight.
It
is
worth noting that the reverse does not hold, that is,
a tournament $T$ with a small $i(T)$ need not
have %
a
large $\pi_2(T)$.
Consider any tournament $T$ on $n=2k+1$ vertices
for some integer
$k\ge 3$, 
constructed as follows (see Figure~\ref{figure:RegularNoTwoPath}).
\begin{figure}[h]
\centering
\includegraphics[width=0.28\linewidth]{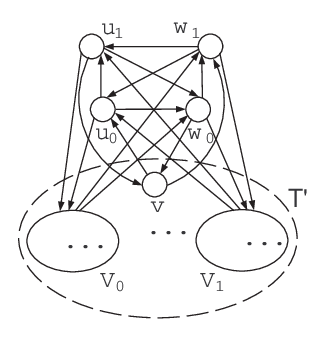}
\caption{A regular tournament without $(u_0,u_1)$-$2$-paths}
\label{figure:RegularNoTwoPath}
\end{figure}
Take a regular tournament $T^\prime$ on $2k-3$ vertices.
Let $V(T^\prime)=V_0 \cup V_1 \cup \{v\}$ be a partition of $V(T^\prime)$ with $|V_0|=|V_1|=k-2$.
$T$ is constructed by adding four new vertices $u_0$, $u_1$, $w_0$ and $w_1$ to $T^\prime$,
and adding arcs so that $V_1 \rightarrow \{u_0, u_1\} \rightarrow V_0$,
$V_0 \rightarrow \{w_0, w_1\} \rightarrow V_1$,
$w_1 \rightarrow \{u_0, u_1\} \rightarrow w_0$,
$u_0 \rightarrow u_1 \rightarrow v \rightarrow u_0$,
$w_0 \rightarrow v \rightarrow w_1$ and $w_0\rightarrow w_1$.
It can be verified that $T$ is regular but there is no $(u_0,u_1)$-$2$-path in $T$.

It may be interesting to characterize the class of tournaments for which equality holds in
Lemma~\ref{lem:i(T)AndPi(T)}, which includes all doubly regular tournaments.
\begin{problem} Characterize the class of tournaments $T$ on $n$ vertices with $i(T)=n-4\pi_2(T)-3$. \end{problem}

If we let $i(T)=0$ in Theorem~\ref{thm:pi_2(T)i(T)},
then $T$ is a regular tournament and the condition becomes $\pi_2(T)>(n+8)/12$.
There is a clear gap between this bound and the sharp one $(n-9)/12$ of Theorem~\ref{thm:2PathsPathExt}.
Thus, there seems to be some space to further tighten the conditions in
Theorem~\ref{thm:pi_2(T)i(T)} and Theorem~\ref{thm:pi_2(T)only}.
However, we believe such improvements will require a different approach with a much more subtle and detailed structural analysis.

\begin{problem} Improve the bound $i(T)< 2\pi_2(T)-(n+8)/6$ for path extendability
of a tournament $T$ on $n$ vertices in Theorem~\ref{thm:pi_2(T)i(T)}. \end{problem}


\begin{thebibliography}{99}


\bibitem{BanGut2008}
Bang-Jensen, J., \& Gutin, G. (2008). Digraphs: theory, algorithms and applications. Springer Science \& Business Media.



\bibitem{Bondy1971}
Bondy, J. A. (1971). Pancyclic graphs I. Journal of Combinatorial Theory, Series B, 11(1), 80-84.

\bibitem{Bondy1973}
Bondy, J.A. (1973). Pancyclic graphs: recent results.
In Colloquia Mathematica Societatis Janos Bolyai 10. Infinite and Finite Sets, Keszthely (Hungary).




\bibitem{FG1996}
Faudree, R. J., \& Gy\'{a}rf\'{a}s, A. (1996).
Endpoint extendable paths in tournaments. Journal of Graph Theory, 23(3), 303-307.


\bibitem{Hendry1989}
Hendry, G. R. (1989). Extending cycles in directed graphs. Journal of Combinatorial Theory, Series B, 46(2), 162-172.

\bibitem{Hendry1990a}
Hendry, G. R. (1990). Extending cycles in graphs. Discrete Mathematics, 85(1), 59-72.

\bibitem{Hendry1990b}
Hendry, G. R. (1990). Path extendable graphs. Periodica Mathematica Hungarica, 21(3), 205-218.

\bibitem{MitUpf2017}
Mitzenmacher, M., \& Upfal, E. (2017).
Probability and computing: Randomization and probabilistic techniques in algorithms and data analysis.
Cambridge university press.

\bibitem{Moon1969}
Moon, J. W. (1969). On cycles in tournaments. Matematick\'{y} \v{c}asopis, 19(2), 121-125.

\bibitem{ReiBei1978}
Reid, K. B., \& Beineke, L. W. (1978). Tournaments. Selected topics in graph theory, 169-204.

\bibitem{ReiBro1972}
Reid, K. B., \& Brown, E. (1972). Doubly regular tournaments are equivalent to skew Hadamard matrices.
Journal of Combinatorial Theory, Series A, 12(3), 332--338.


\bibitem{Thomassen1980}
Thomassen, C. (1980). Hamiltonian-connected tournaments. Journal of Combinatorial Theory, Series B, 28(2), 142-163.


\bibitem{Zhang1982}
Zhang, K. (1982). Completely strong path-connected tournaments. Journal of Combinatorial Theory, Series B, 33(2), 166--177.

\bibitem{Zhang2017}
Zhang, Z. (2017). Paths, cycles and related partitioning problems in graphs. PhD Thesis, University of Twente.

\bibitem{ZZBL2017} Zhang, Z.-B., Zhang, X., Broersma, H.  \& Lou, D. (2017).
Extremal and Degree Conditions for Path Extendability in Digraphs. SIAM J. Discrete Math., 31(3), 1990--2014.

\end{thebibliography}
\end{document}